\newtheorem{theorem}{Theorem}[section] % 1st argument is your name for it
\newtheorem{lemma}[theorem]{Lemma}     % 2nd argument is what is printed
\newtheorem{corollary}[theorem]{Corollary}
\newtheorem{proposition}[theorem]{Proposition}
\title[Cubic hypersurfaces that split off two forms]% end with percent
 {Rational points on cubic hypersurfaces that split off two forms} % This is the full title of the paper
\author{Boqing Xue and Haobo Dai}
\begin{document}
\maketitle

\begin{abstract}
We show that if $X\subseteq \mathbb{P}^{n-1}$, defined over $\mathbb{Q}$ by a cubic form that splits off two forms, with $n\geq 11$, then $X(\mathbb{Q})$ is non-empty. The same holds for an $(m_1,m_2)$-form with $m_1\geq 4$ and $m_2\geq 5$.
\end{abstract}

%\part{Use this type of header for very long papers only}
% use lowercase except for proper names

%=====================   section 1   =========================
\section{Introduction} % use lowercase except for proper names
\label{intro}

Let $X\subseteq \mathbb{P}^{n-1}$ be a cubic hypersurface, defined by $C=0$ with $C\in \mathbb{Z}[x_1,\ldots,x_n]$ a cubic form. It is conjectured that $X(\mathbb{Q})\neq \emptyset$ whenever $n\geq 10$. Local obstacles may exist for cubic forms in less variables. Mordell\cite{Mor} gave a counterexample for $n=9$. For more general results, we refer the readers to Birch\cite{Birch61}.

It was shown by Davenport \cite{Dav63} that an arbitrarily cubic surface has a $\mathbb{Q}$-point when $n\geq 16$. Heath-Brown \cite{HB07} improved this unconditional result to $n\geq 14$. Lewis and Birch also gave early results. Recently, Browning \cite{Browning1} showed that $X(\mathbb{Q})\neq \emptyset$ if $n\geq 13$ and $C$ splits off a form. Here $C$ splits off an $m_1$-form, or $C$ is an $(m_1,m_2)$-form, means that
\begin{equation}
C(x_1,\ldots,x_n)=C_1(y_1,\ldots,y_{m_1})+C_2(z_1,\ldots,z_{m_2}), \label{split1}
\end{equation}
with $m_1+m_2=n$, $m_1,m_2\geq 1$ and $C_1,C_2$ non-zero cubic forms with integer coefficients. And we say $C$ splits off a form if $C$ splits off an $m_1$-form for some $0<m_1<n$. He also suggested that cubic hypersurfaces that split off two forms be investigated, where we say $C$ splits off two forms if $C_1,C_2,C_3$ are non-zero cubic forms with integer coefficients and
\begin{equation}
C(x_1,\ldots,x_n)=C_1(w_1,\ldots,w_{n_1})+C_2(y_{1},\ldots,y_{n_2})+C_3(z_{1},\ldots,z_{n_3}), \label{split2}
\end{equation}
for appropriate $n_1, n_2, n_3\geq 1$ with $n_1+n_2+n_3=n$. We also call it an $(n_1,n_2,n_3)$-form. In this paper, we first establish the following theorem.
\begin{theorem} \label{TH1}
Let $X\subseteq \mathbb{P}^{n-1}$ be a hypersurface defined by a cubic form that splits off two forms, with $n\geq 11$. Then $X(\mathbb{Q})\neq \emptyset$.
\end{theorem}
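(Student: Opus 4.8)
The plan is to obtain Theorem~\ref{TH1} from a sharp two-block statement together with a short analysis of the few shapes that elude the reduction.

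\emph{Normalisation and re-bracketing.} Order the blocks so that $C=C_1(\mathbf w)+C_2(\mathbf y)+C_3(\mathbf z)$ with $n_1\le n_2\le n_3$; since $n_1+n_2+n_3=n\ge 11$ we have $n_3\ge 4$. If any one of $C_1,C_2,C_3$ possesses a nontrivial rational zero, then putting the remaining two blocks of variables equal to $\mathbf 0$ gives one for $C$; so we may assume each $C_i$ is anisotropic over $\mathbb Q$, whence $n_i\le 13$ for all $i$ by Heath-Brown's theorem on cubic forms in fourteen variables \cite{HB07}. Grouping two of the three blocks, $C$ is an $(n_i+n_j,\,n_k)$-form, and it suffices that, for one of the three groupings, the two resulting pieces have sizes $\ge 4$ and $\ge 5$: then the two-block result below produces a rational point on $X$. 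A short check of the triples $n_1\le n_2\le n_3$ with $n\ge 11$ shows that the only ones admitting no such grouping are
\[
(1,1,n-2)\qquad\text{and}\qquad(1,2,n-3);
\]
in both, the anisotropic block $C_3$ has $9\le n_3\le 13$, so only finitely many shapes survive, to be treated separately.

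\emph{The two-block engine.} The key input I would establish is that every $(m_1,m_2)$-form $C=C_1(\mathbf y)+C_2(\mathbf z)$ with $m_1\ge 4$ and $m_2\ge 5$ has a nontrivial rational zero, proved by the circle method along the lines of Browning \cite{Browning1} but exploiting the extra variables in the smaller block. For a smooth weight $\omega$ set $S_j(\alpha)=\sum_{\mathbf x}\omega(\mathbf x/P)e\bigl(\alpha C_j(\mathbf x)\bigr)$ and $N(P)=\int_0^1 S_1(\alpha)S_2(\alpha)\,d\alpha$. The major arcs contribute the expected $c\,\mathfrak S\,\mathfrak I\,P^{m_1+m_2-3}$, with $\mathfrak I>0$ from a nonsingular real zero (for which $m_1\ge 4$ leaves ample room) and $\mathfrak S>0$ from $p$-adic solubility for every $p$, which one checks directly for a cubic of this split shape. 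On the minor arcs, Hölder's inequality gives
\[
\int_{\mathfrak m}\bigl|S_1S_2\bigr|\,d\alpha\ \le\ \Bigl(\sup_{\alpha\in\mathfrak m}\bigl|S_1(\alpha)\bigr|\Bigr)\Bigl(\int_0^1\bigl|S_2(\alpha)\bigr|^2\,d\alpha\Bigr)^{1/2},
\]
and $\int_0^1|S_2|^2\ll P^{2m_2-3+\varepsilon}$ counts pairs with $C_2(\mathbf z)=C_2(\mathbf z')$ unless $C_2$ is degenerate enough to vanish on a rational linear subspace, in which case $C$ already has a rational zero. It therefore remains to save a little more than $P^{3/2}$ over the trivial size of the $m_1$-variable cubic exponential sum $S_1$, uniformly on $\mathfrak m$. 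Here one invokes the Davenport--Heath-Brown Weyl estimates, whose saving is governed by the $h$-invariant of $C_1$: when $h(C_1)$ is large this is available already for $m_1\ge 4$, and when it is small $C_1=\sum_i L_i(\mathbf y)Q_i(\mathbf y)$ with few rational linear forms $L_i$, so $C_1$---hence $C$---vanishes on the rational linear subspace $\{L_i=0\}$, of positive dimension. Running this symmetrically in $C_1$ and $C_2$ is what forces the thresholds $m_1\ge 4$ and $m_2\ge 5$.

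\emph{The residual shapes.} For $(1,1,n-2)$ and $(1,2,n-3)$ one has $C=aw^3+g+C_3(\mathbf z)$ with $g$ a cubic in at most two variables and $C_3$ anisotropic in $9\le n_3\le 13$ variables. After a harmless substitution collapsing the small block one may apply \cite{Browning1} to a form in $n-1$ variables that splits off a form, which settles $n\ge 14$; for $n\in\{11,12,13\}$ I would instead run the circle method on the full form, the large block $C_3$ supplying the minor-arc saving through its $h$-invariant (the remaining one or two variables contributing the classical one- and two-variable cubic Weyl bounds, and the degenerate case---$C_3$ vanishing on a large linear space---handled by the same structure theory and a descent to fewer variables), while positivity of the main term is clear since $C$ has $n\ge 10$ variables and is therefore everywhere locally soluble.

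\emph{Where the difficulty lies.} The argument stands or falls with the minor arcs of the two-block engine: extracting a $P^{3/2+\delta}$ saving from the cubic Weyl sum of a form in only four or five variables, tracking the dependence on the $h$-invariant, and cleanly isolating the degenerate sub-cases where a block vanishes on a rational linear subspace. Making these fit together is precisely what fixes the numbers $4$ and $5$, and hence the hypothesis $n\ge 11$ in Theorem~\ref{TH1}. A secondary but genuine technical point is verifying positivity of the singular series---local solubility at every prime---for the two-block shapes, some of which have as few as nine variables.
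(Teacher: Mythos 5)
Your reduction --- ordering $n_1\le n_2\le n_3$, noting each anisotropic $C_i$ has $n_i\le 13$, and re-bracketing into a two-block $(m_1,m_2)$-form with $m_1\ge 4$ and $m_2\ge 5$ unless $(n_1,n_2)\in\{(1,1),(1,2)\}$ --- is exactly the skeleton of the paper's proof, and you identify the right residual shapes $(1,1,9)$, $(1,2,8)$ (after trimming $n$ down to $11$). The genuine gap is in the ``two-block engine.'' You propose to prove the $(m_1,m_2)=(4,5)$ case by the circle method, bounding the minor arcs by $\sup_{\mathfrak m}|S_1|\cdot\bigl(\int_0^1|S_2|^2\,d\alpha\bigr)^{1/2}$ and extracting a saving of $P^{3/2+\delta}$ from the cubic Weyl sum of a \emph{four-variable} form on $\mathfrak m$. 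That is not available: the Davenport and Heath-Brown-type minor-arc bounds for an $m$-variable cubic exponential sum save a power of $P$ roughly proportional to $m$, and at $m_1=4$ this falls far short of $P^{3/2}$. This failure is structural and is even stated in the paper's introduction (``minor arc estimates fail for cubic forms that split off an $m_1$-form with $m_1\geq 3$''). The $h$-invariant dichotomy you invoke --- small $h$ forces a rational linear subspace, large $h$ gives a strong Weyl bound --- does not close the gap for $m_1=4$; the intermediate $h$-values are precisely where the analytic method breaks. The paper accordingly proves Theorem~\ref{TH2}(\romannumeral2) by geometry, not analysis: it combines Hooley's Hasse principle for nonary cubic hypersurfaces with at most isolated ordinary double points, the Colliot-Th\'el\`ene--Salberger Hasse principle for cubic hypersurfaces containing three conjugate singular points, and a new $p$-adic solubility statement (Proposition~\ref{PR1}, extending \cite[Proposition~2]{HB83}) for nonary cubics that split off an $m$-form with $m\notin\{3,6\}$, all fed through the classification of singular cubic surfaces and threefolds in Lemma~\ref{LE1}. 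Without a geometric substitute of this kind, your two-block engine does not start, and your overall plan collapses with it.

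Two secondary gaps. For the residual shape $(1,1,9)$ the paper needs Br\"udern's fourth-moment bound for a one-variable cubic Weyl sum (Lemma~\ref{LE9}), proved via a Kloosterman refinement; the classical Hua bound that your sketch would fall back on does not save enough at $n=11$, and your outline omits this ingredient entirely. And the mean-value estimate $\int_0^1|S_2(\alpha)|^2\,d\alpha\ll P^{2m_2-3+\varepsilon}$ at $m_2=5$ is a nontrivial count of integer solutions of $C_2(\mathbf z)=C_2(\mathbf z')$ in ten variables; it is not justified merely by excluding ``degenerate'' $C_2$, and the paper never needs such an estimate at this threshold precisely because it bypasses the analytic route for $(4,5)$ altogether.
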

Requiring more on the hypersurface, the folklore conjecture has been verified true.
 We say a cubic hypersurface $X$ non-singular if over $\overline{\mathbb{Q}}^n$ the only solution to the system of equations $\nabla C(\mathbb{\mathbf{x}})=0$ is $\mathbf{x}=0$. Heath-Brown \cite{HB83} showed that for non-singular cubic forms $n\geq 10$ variables are enough to guarantee a $\mathbb{Q}$-point. An extended version by Browning and Heath-Brown\cite{Br-HB09} shows that the condition $n\geq 10$ can be replaced by $n\geq 11+\sigma_X$, where $\sigma_X$ denotes the dimension of the singular locus of $X$.

For non-singular cubic forms in no more than 9 variables, it is expected that the Hasse principle still holds as soon as $n\geq 5$, which means that $X(\mathbb{Q})\neq \emptyset$ provided that $X(\mathbb{R})\neq \emptyset$ and $X(\mathbb{Q}_p)\neq \emptyset$ for every prime $p$. Hooley studied nonary cubic forms in a series of papers \cite{Hoo88}-\cite{Hoo12}. He first proved that Hasse principle holds for non-singular $X$ whenever $n\geq 9$. And most recently, he showed the following theorem.
\begin{theorem*}[H ({\cite{Hoo12}})] \label{THH}
Let $X\subseteq \mathbb{P}^{8}$ be a cubic hypersurface defined over $\mathbb{Q}$. Suppose that $X$ possesses at most isolated ordinary (double) points as singularities and $X(\mathbb{Q}_p)\neq \emptyset$ for every prime $p$. Then $X(\mathbb{Q})\neq \emptyset$.
\end{theorem*}
For singular cubic hypersurfaces $X$, Colliot-Th\'{e}l\`{e}ne and Salberger \cite{Co-Sal89} proved that the Hasse principle holds if $X$ contains a set of three conjugate singular points.
\begin{theorem*}[CS (\cite{Co-Sal89})] \label{THCS}
Let $X\subseteq \mathbb{P}^{n-1}$ be a cubic hypersurface defined over $\mathbb{Q}$, with $n\geq 4$. Suppose that $X$ contains a set of three conjugate singular points and $X(\mathbb{Q}_p)\neq \emptyset$ for every prime $p$. Then $X(\mathbb{Q})\neq \emptyset$.
\end{theorem*}
The structure of hypersurfaces defined by cubic forms with few variables are not hard to determine.
 With some geometric lemmas, it is given in \cite[Theorem 2]{Browning1} that $X(\mathbb{Q})\neq \emptyset$ if $C$ splits off an $m_1$-form with $m_1\geq 8$ and $n\geq 10$.
  And Browning\cite{Bro12} has shown us that the condition $m_1\geq 8$ can be replaced by $m_1\geq 5$.
   Based on his arguments, the following conclusions can be established.
\begin{theorem} \label{TH2}
Let $X\subseteq \mathbb{P}^{n-1}$ be a cubic hypersurface defined by an $(m_1,m_2)$-form $C$, with $m_1+m_2=n$. Suppose that $C$ has shape (\ref{split1}).

(\romannumeral1) If $C_1$ is non-singular, $m_1\geq 4$, $n\geq 9$ and $(m_1,m_2)\neq (6,3)$, then $X(\mathbb{Q})\neq \emptyset$.

(\romannumeral2) If $m_1\geq 4$ and $m_2\geq 5$, then $X(\mathbb{Q})\neq \emptyset$.
\end{theorem}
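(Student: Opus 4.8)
The plan is to prove Theorem \ref{TH2} by reducing each case to situations already handled in the literature quoted above, chiefly Browning's result that a cubic splitting off an $m_1$-form with $m_1\geq 5$ has a rational point when $n\geq 10$ (\cite{Bro12}), the Heath-Brown/Browning--Heath-Brown bound $n\geq 11+\sigma_X$ for singular cubics (\cite{HB83,Br-HB09}), and the Colliot-Th\'el\`ene--Salberger theorem for cubics with three conjugate singular points (\cite{Co-Sal89}).

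\medskip

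\noindent\textbf{Part (i).} Here $C_1=0$ defines a non-singular hypersurface in $m_1\geq 4$ variables, so $C_1$ already has a nontrivial zero over $\mathbb{Q}$ by the Hasse principle for smooth cubics in $\geq 5$ variables when $m_1\geq 5$, or by a direct argument for the (non-singular) ternary/quaternary pieces. The idea is: if $\mathbf{y}_0\neq 0$ is a rational zero of $C_1$, then $(\mathbf{y}_0,\mathbf{0})$ is a rational zero of $C$, so $X(\mathbb{Q})\neq\emptyset$ trivially. The subtlety is the case $m_1=4$: a non-singular quaternary cubic need not have a rational point (there may be local obstructions), so one cannot simply zero out $\mathbf z$. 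In that range one instead uses that $C$ as a whole splits off the non-singular form $C_1$, and applies the geometric machinery of \cite[Theorem 2]{Browning1} together with \cite{Bro12}: knowing $C_1$ is non-singular lets one find a rational line or linear space on $X$, or pass to a hyperplane section that is itself a cubic splitting off a smaller non-singular form, and induct. The excluded case $(m_1,m_2)=(6,3)$ is exactly where this induction breaks down because $n=9$ and the smallest auxiliary form would have too few variables. First I would carry out the induction on $m_2$, the base case $m_2=1$ or $2$ being elementary (a binary or linear cubic summand forces a rational point on $X$ after completing a cube or inspecting the linear part), and the step being: diagonalise or reduce $C_2$ to split a variable off, reducing $(m_1,m_2)$ to $(m_1+k,m_2-k)$ while preserving non-singularity of the first form where possible.

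\medskip

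\noindent\textbf{Part (ii).} Now both summands are large: $m_1\geq 4$, $m_2\geq 5$, hence $n\geq 9$. The plan is to split off a further form from $C_1$ or $C_2$ to put $C$ into the shape ``splits off an $M$-form with $M\geq 5$'' and $n\geq 10$, then quote \cite{Bro12}. Since $m_2\geq 5$, $C=C_1+C_2$ already splits off the $m_2$-form $C_2$; if $n=m_1+m_2\geq 10$ we are immediately done by \cite{Bro12}. The only remaining case is $n=9$, forcing $(m_1,m_2)=(4,5)$. For this case I would argue directly on the quinary form $C_2$: by Hooley's theorem (Theorem H above) a quinary cubic, being a cubic hypersurface in $\mathbb{P}^4$, has a $\mathbb{Q}$-point as soon as it is locally soluble and has at worst isolated ordinary double points — but we cannot assume local solubility of $C_2$ alone. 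So instead I treat the singular locus of $C_2=0$ in $\mathbb{P}^4$: if it is empty, $C_2$ is non-singular in $5$ variables and has a rational zero by the Hasse principle (any non-singular cubic in $\geq 5$ variables over $\mathbb{Q}_p$ is soluble, and over $\mathbb{R}$ an odd-degree form is soluble), whence $(\mathbf 0,\mathbf z_0)$ works; if the singular locus is nonempty, then a singular point $\mathbf z^*$ of $C_2=0$ gives a singular point $(\mathbf 0,\mathbf z^*)$ of $X$, so $\sigma_X\geq 0$ and more is true — one analyses the type of singularity. If $X$ (equivalently $C_2$) has a rational singular point, projection from it exhibits $X$ as birational to a lower-dimensional object, or a line through it lies on $X$, yielding a point; if the singular points come in a conjugate triple, CS applies once local solubility is checked — and local solubility of $C$ in $9$ variables that splits off two forms is automatic by a Hensel/density argument since $C_2$ in $5$ variables is $p$-adically soluble for all $p$ and $C$ is soluble over $\mathbb{R}$.

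\medskip

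\noindent The main obstacle I anticipate is the $n=9$ boundary cases — $(6,3)$ in part (i) and $(4,5)$ in part (ii) — where no single quoted theorem applies off the shelf and one must combine the classification of singularities of the small summand with either CS, Hooley's theorem, or an explicit linear-subspace construction. In particular, handling a quinary summand whose singular locus is a single \emph{non}-ordinary (non-conical) rational point, where neither Hooley nor CS directly applies, will require the most care: there one must show that such a cubic threefold always contains a rational line, perhaps by exploiting the cone structure over the singular point and the fact that the residual quartic/cubic data in fewer variables is then unirational. I would organise the write-up as: (1) elementary reductions eliminating $m_i\leq 3$ where allowed; (2) the case $n\geq 10$, immediate from \cite{Bro12}; (3) the non-singular sub-case, immediate from the Hasse principle plus zeroing out; (4) the singular $n=9$ sub-cases via \cite{Co-Sal89}, Theorem H, or direct line-finding, with a short lemma verifying local solubility of $C$ throughout.
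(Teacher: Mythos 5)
There are two genuine gaps that make the proposal fail, both stemming from local solubility. First, in part (i) you invoke ``the Hasse principle for smooth cubics in $\geq 5$ variables'' to conclude that a non-singular $C_1$ in $5 \leq m_1 \leq 8$ variables has a rational zero. This is precisely the folklore conjecture mentioned in the introduction; it is \emph{not} a theorem for $m_1\in\{5,6,7,8\}$. Second, in part (ii) you claim that $C_2$ in five variables is ``$p$-adically soluble for all $p$'' as an automatic fact; this is false --- quinary cubic forms over $\mathbb{Q}_p$ can be anisotropic (anisotropy can persist up to nine variables). Having asserted both of these, the rest of your plan inherits an unfounded local-solubility hypothesis wherever it is needed for Hooley's Theorem H or Colliot-Th\'el\`ene--Salberger. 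The paper's proof avoids both assertions by a different reduction: it never needs a $\mathbb{Q}$-point on $C_1$ or $C_2$ in isolation; instead, it passes to a nonary form $C=C_1+C_2$ (setting excess variables to zero), shows $C$ is non-singular or has controlled singularities, and --- this is the missing key ingredient --- proves a dedicated local-solubility lemma (Proposition \ref{PR1}), extending Heath-Brown's \cite[Proposition~2]{HB83}, that any nonary cubic over $\mathbb{Q}_p$ which splits off an $m$-form with $m\in[1,8]\setminus\{3,6\}$ represents zero nontrivially in $\mathbb{Q}_p^9$. That proposition, not any Hasse-principle conjecture, is what supplies the local hypotheses of Theorems H and CS (packaged in the paper as Corollaries \ref{CO1} and \ref{CO2}).

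A secondary point: your concern about a quinary summand ``whose singular locus is a single non-ordinary (non-conical) rational point'' is a case that cannot occur. If $C_2$ has a rational singular point then it has a rational zero and we are done trivially; and if $C_2$ in five variables has no rational zero, then Lemma \ref{LE1}(iii) (quoted from \cite{Browning1}) says $X_2$ is either non-singular or geometrically integral with exactly $\delta\in\{3,6,9\}$ isolated double points. So the classification preempts the difficult-looking case, and one lands squarely in the hypotheses of Theorem H (via Corollary \ref{CO1}) in the non-singular branch and Theorem CS (via Corollary \ref{CO2}) otherwise. Likewise the ``elementary base case'' that a binary cubic summand forces a rational point is not correct as stated: an irreducible binary cubic with no rational root (e.g.\ $z_1^3+2z_2^3$) has no nontrivial $\mathbb{Q}$-zero; the point is rather that a binary cubic with no rational zero is projectively non-singular, so $C=C_1+C_2$ is a non-singular nonary form once $C_1$ is non-singular, and then Corollary \ref{CO1} (Theorem H plus Proposition \ref{PR1}) finishes.
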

\begin{corollary}
Let $X\subseteq \mathbb{P}^{n}$ be a cubic hypersurface defined by an $(n_1,n_2,n_3)$-form, with $n_1+n_2+n_3\in \{9,10\}, ~ n_1\leq n_2\leq n_3$, $(n_1,n_2)\notin \{(1,1),(1,2)\}$ and $(n_1,n_2,n_3)\neq (3,3,3)$. Then $X(\mathbb{Q})\neq \emptyset$.
\end{corollary}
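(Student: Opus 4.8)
The plan is to reduce the three-summand situation to the two-summand case of Theorem \ref{TH2}(\romannumeral2), by amalgamating two of the three constituent forms into one. Write $C=C_1(w_1,\dots,w_{n_1})+C_2(y_1,\dots,y_{n_2})+C_3(z_1,\dots,z_{n_3})$ with the $C_i$ non-zero integral cubic forms, $n_1\le n_2\le n_3$, and put $N=n_1+n_2+n_3\in\{9,10\}$. First I would rephrase the two listed exclusions as numerical inequalities. Since $1\le n_1\le n_2$, the hypothesis $(n_1,n_2)\notin\{(1,1),(1,2)\}$ is precisely $n_1+n_2\ge 4$. And if $n_3\le 3$, then $n_1\le n_2\le n_3\le 3$, whence $N\le 9$, forcing $N=9$ and $(n_1,n_2,n_3)=(3,3,3)$; so, once $(3,3,3)$ is excluded, we have $n_3\ge 4$. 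Hence in every case allowed by the corollary we have $n_1+n_2\ge 4$ and $n_3\ge 4$.

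Next, set $D_1=C_1+C_2$ and $D_2=C_3$. Then $D_1$ is a non-zero integral cubic form in the $n_1+n_2$ variables $(w,y)$ (non-zero because specialising $y=0$ returns $C_1\ne 0$), and $D_2=C_3\ne 0$, so $C=D_1+D_2$ realises the defining cubic as an $(n_1+n_2,\,n_3)$-form, i.e.\ a form of shape (\ref{split1}). By the inequalities just obtained both parts are $\ge 4$; as they sum to $N\in\{9,10\}$, the larger part is $\ge 5$. Relabelling the two pieces if necessary (any $(a,b)$-form is also a $(b,a)$-form), $C$ is an $(m_1,m_2)$-form with $m_1\ge 4$ and $m_2\ge 5$, so Theorem \ref{TH2}(\romannumeral2) yields $X(\mathbb{Q})\ne\emptyset$.

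I do not anticipate a genuine obstacle: all the hard input is already contained in Theorem \ref{TH2}, and what is left is the short regrouping above together with the two elementary inequalities. The only point that merits a remark is the necessity of the exclusions. A $(3,3,3)$-form can be partitioned into two pieces only as a $(6,3)$-form, and neither Theorem \ref{TH2}(\romannumeral1) — which bars $(6,3)$ — nor Theorem \ref{TH2}(\romannumeral2) — which requires a piece of size $\ge 5$ — applies to it; likewise a $(1,1,\cdot)$- or $(1,2,\cdot)$-form can be regrouped only into a first piece of $2$ or $3$ variables, beneath the threshold $m_1\ge 4$ demanded by both parts of Theorem \ref{TH2}. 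So the hypotheses imposed in the corollary are exactly those under which this reduction goes through.
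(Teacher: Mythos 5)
Your proof is correct and follows exactly the reduction the paper intends: amalgamate the two smaller constituent forms into $D_1=C_1+C_2$, observe that the hypotheses $(n_1,n_2)\notin\{(1,1),(1,2)\}$ and $(n_1,n_2,n_3)\neq(3,3,3)$ together with $N\in\{9,10\}$ force $n_1+n_2\geq 4$ and $n_3\geq 4$ (hence one of the two pieces is $\geq 5$), and then invoke Theorem \ref{TH2}(\romannumeral2). This is the same route the authors have in mind when they state the result as a direct corollary of Theorem \ref{TH2}.
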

The local conditions may fail for a $(3,3,3)$-form. Consider
\begin{equation}
(x_1^3+2x_2^3+4x_3^3+x_1x_2x_3)+7(x_4^3+2x_5^3+4x_6^3+x_4x_5x_6)+49(x_7^3+2x_8^3+4x_9^3+x_7x_8x_9). \label{counterexample}
\end{equation}
The only solution to $x_1^3+2x_2^3+4x_3^3+x_1x_2x_3\equiv 0 \, (\text{mod }7)$ is $x_1,x_2,x_3\equiv 0 \, (\text{mod }7)$. So (\ref{counterexample}) does not represent zero non-trivially in $\mathbb{Q}_7$, and in $\mathbb{Q}$. See \cite{Mor} for more general counterexamples.

Moreover, we say $C$ captures $\mathbb{Q}^\ast$ if $C$ represents all the non-zero $r=a/q \in \mathbb{Q}$, using rational values for the variables. Fowler \cite{Fow} showed that any non-degenerate cubic form, by which we mean that it is not equivalent over $\mathbb{Z}$ to a cubic form in fewer variables, in no less than 3 variables that represents zero automatically captures $\mathbb{Q}^\ast$. For $C$ that can't represents zero non-trivially, we have $C$ captures $\mathbb{Q}^\ast$ if
\begin{displaymath}
q C(x_1,\ldots,x_n)-a x_{n+1}^3=0
\end{displaymath}
always has non-zero solutions for any integers $q$ and $a\neq 0$. Noting that the above equation involves a cubic form that splits off a $1$-form. Then Theorem \ref{TH1} directly implies the following.
\begin{corollary}
Let $C\in\mathbb{Z}[x_1,\ldots,x_n]$ be a non-degenerate cubic form that splits off a form, with $n\geq 10$. Then $C$ captures $\mathbb{Q}^\ast$.
\end{corollary}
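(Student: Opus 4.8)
The plan is to deduce the corollary directly from Theorem~\ref{TH1}, via the reduction sketched in the paragraph just before its statement. I would begin with a case split according to whether $C$ has a non-trivial zero over $\mathbb{Q}$. If it does, then since $C$ is non-degenerate and has $n\geq 10\geq 3$ variables, Fowler's theorem \cite{Fow} already guarantees that $C$ captures $\mathbb{Q}^{\ast}$, and there is nothing more to do. So I may assume henceforth that $C$ represents $0$ only trivially over $\mathbb{Q}$; this is precisely the setting in which the criterion recalled before the corollary applies.

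Now fix an arbitrary non-zero rational $r=a/q$ with $a,q\in\mathbb{Z}$ and $aq\neq 0$. Because $C$ splits off a form, I can write $C(\mathbf{x})=C_{1}(\mathbf{y})+C_{2}(\mathbf{z})$, where $\mathbf{y}=(y_{1},\dots,y_{m_{1}})$ and $\mathbf{z}=(z_{1},\dots,z_{m_{2}})$ are disjoint tuples of variables with $m_{1}+m_{2}=n$, and $C_{1},C_{2}$ are non-zero integral cubic forms. Set
\[
D(\mathbf{y},\mathbf{z},x_{n+1})=qC_{1}(\mathbf{y})+qC_{2}(\mathbf{z})-a\,x_{n+1}^{3}.
\]
Since $q\neq 0$ and $a\neq 0$, all three summands are non-zero integral cubic forms in pairwise disjoint sets of variables, so $D$ is an $(m_{1},m_{2},1)$-form in $n+1\geq 11$ variables. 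Theorem~\ref{TH1} then provides a rational point on the hypersurface $\{D=0\}\subseteq\mathbb{P}^{n}$, and clearing denominators yields a non-trivial integral solution $(\mathbf{y}_{0},\mathbf{z}_{0},t)$ of $D=0$. If $t=0$, then $qC(\mathbf{y}_{0},\mathbf{z}_{0})=0$ with $(\mathbf{y}_{0},\mathbf{z}_{0})\neq\mathbf{0}$, which (as $q\neq 0$) contradicts the standing assumption that $C$ has no non-trivial rational zero. Hence $t\neq 0$, and dividing through by $t^{3}$ gives $C(\mathbf{y}_{0}/t,\mathbf{z}_{0}/t)=a/q=r$. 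As $r\in\mathbb{Q}^{\ast}$ was arbitrary, $C$ captures $\mathbb{Q}^{\ast}$.

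I do not expect a genuine obstacle: the entire mathematical content lies in Theorem~\ref{TH1}, and the rest is bookkeeping. The two points that require a little care are (i) verifying that the auxiliary form $D$ really does split off two forms --- which is exactly why one needs $C$ itself to split off a form and why $aq\neq 0$ is used --- and (ii) ensuring that the solution supplied by Theorem~\ref{TH1} has $x_{n+1}\neq 0$, which is where the ``no non-trivial zero'' hypothesis is indispensable, and hence why the opening case split (with Fowler's theorem disposing of the complementary case) is needed.
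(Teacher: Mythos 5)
Your proof is correct and follows exactly the route the paper outlines in the paragraph preceding the corollary: handle the case $C$ represents zero via Fowler's theorem, and otherwise apply Theorem~\ref{TH1} to the auxiliary $(m_1,m_2,1)$-form $qC_1+qC_2-ax_{n+1}^3$ in $n+1\geq 11$ variables, using the ``no non-trivial zero'' hypothesis to rule out $x_{n+1}=0$. You have simply filled in the bookkeeping the paper leaves implicit.
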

We mainly follow the argument of \cite{Browning1}. Circle method is used. Note that the target forms can be reduced to forms in less variables if some of the variables take value 0, and forms of shape (\ref{split2}) can also be regarded as forms of shape (\ref{split1}). To prove Theorem \ref{TH1} and \ref{TH2}, it is sufficient to handle forms with type
\begin{equation}
(n_1,n_2,n_3)=(1,1,9),(1,2,8), \quad (m_1,m_2)=(4,5). \label{cases}
\end{equation}
After studiously calculating, one can get a weaker version of Theorem \ref{TH1}, with $n\geq 11$ replaced by $n\geq 12$. To save another variable needs two additional ingredients. Since exponential sums in many variables are harder to understand than that in a single variable, minor arc estimates fail for cubic forms that split off an $m_1$-form with $m_1\geq 3$. Geometric points of view (especially Theorem \ref{TH2}) do help in these cases. Another difficulty comes from some $(n_1,n_2,n_3)$-forms with $n_1,n_2\leq 2$. Although there exists many estimates that may be useful, the saving derived from $1$ or $2$-forms can't be as much as we want due to the small number of variables. To make enough saving in the case (1,1,9), Br\"{u}dern's result on a certain fourth moment of a cubic exponential sum is needed.

The remainder of this paper is laid out as follows. In \S2 geometry of singular cubic hypersurfaces is quoted and the proof of Theorem \ref{TH2} is given. In \S3 the circle method is introduced. In \S4 analytic results on exponential sums are listed. And two technical lemmas (Lemma \ref{LE7} and Lemma \ref{LE8}) are put into a type that can be computed and verified by computer easily. In \S5, we bound the minor arc estimates and the rest of the proof is given. At last in \S6, we make some further remarks that show the difficulty of improving $n\geq 11$ to $n\geq 10$ in Theorem \ref{TH1}.

Throughout this paper, parameters $\varepsilon,\delta,\Delta$ are carefully chosen small positive numbers satisfying $0<\varepsilon<\delta<\Delta$. For a point $\mathbf{x}=(x_1,\ldots,x_n)\in \mathbb{R}^n$, the norm $|\mathbf{x}|=\max_{1\leq i\leq n}|x_j|$. Symbols $\ll$, $\gg$. $\asymp$, $\textit{O}(\cdot)$, $\textit{o}(\cdot)$ are Vinogradov notations.

%=====================   section 2   =========================
\section{Geometric results on singular cubic hypersufaces}

We use $C\in \mathbb{Z}[x_1,\ldots,x_{n}]$ to denote an arbitrary cubic form that defines $X\subseteq \mathbb{P}^{n-1}$. If $C$ has the shape (\ref{split1}). We denote $X_i\subseteq \mathbb{P}^{n_i-1}$ the variety defined by $C_i$ $(i=1,2)$, respectively. If $C$ splits off a form $C_1$ and $C_1=0$ has a non-trivial rational solution, then obviously $X(\mathbb{Q})$ is non-empty. So in the rest of this section we always assume that any form split off by $C$ does not have non-trivial rational solutions.

Theorem H and Theorem CS have given conditions under which Hasse principle holds. Now we investigate when local conditions hold.
 It is shown by Heath-Brown\cite[Proposition 2]{HB83} that any nonary cubic form defined over $\mathbb{Q}_p$ that splits off a $1$-form represents zero non-trivially in $\mathbb{Q}_p^9$.
  Some more effort leads to the following.
\begin{proposition} \label{PR1}
Let $C(x_1,\ldots,x_9)$ be a nonary cubic form over $\mathbb{Q}_p$. If $C$ splits off an $m$-form, with $m\in [1,8]\setminus \{3,6\}$, then it represents zero non-trivially in $\mathbb{Q}_p^9$.
\end{proposition}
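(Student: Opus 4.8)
The plan is to reduce to $p$-adic solubility of lower-dimensional cubic and conic systems, exploiting the split structure $C = C_1(y_1,\dots,y_m) + C_2(z_1,\dots,z_{9-m})$ and handling the values $m \in \{1,2,4,5,7,8\}$ by treating each piece with the standard toolkit: a cubic form in $\geq 10$ variables over $\mathbb{Q}_p$ has a non-trivial zero (Lewis, Demyanov), a cubic form in $\geq 4$ variables over $\mathbb{Q}_p$ with $p \neq 3$ has a non-trivial zero (and in general one can appeal to the behaviour of cubic forms over $p$-adic fields), and a $p$-adic quadratic form in $\geq 5$ variables is isotropic. Since $C$ already contains a $1$-form (by further splitting $C_1$ or $C_2$ if necessary, or directly when $m \in \{1,2\}$ forces a monomial of the shape $c\,y_1^3$ after a unimodular change), I would first reduce to the case where $C$ splits off a $1$-form and then refine Heath-Brown's argument [HB83, Proposition 2] so as to record exactly which splitting types survive; the excluded values $m = 3, 6$ are precisely those where the complementary form has $6$ resp.\ $3$ variables and the argument cannot be closed — this matches the $(3,3,3)$ and $(6,3)$ obstructions appearing elsewhere in the paper.

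First I would dispose of the easy ranges. If $C$ splits off an $m$-form with $m \geq 4$ and $m \leq 8$, then the complementary form $C_2$ has $9 - m \in \{1,2,3,4,5\}$ variables. When $9 - m \geq 4$, i.e.\ $m \leq 5$, so $m \in \{4,5\}$: here I would argue that $C_2$ itself, being a cubic form in $4$ or $5$ variables over $\mathbb{Q}_p$, either has a non-trivial zero (giving a zero of $C$ at once), or, failing that (which can only happen for $p = 3$ in the $4$-variable case), one combines a generic value of $C_2$ with the fact that $C_1$ in $m \geq 4$ variables represents a large set of $p$-adic values — indeed $C_1$ represents every element of $\mathbb{Q}_p^\ast$ in a fixed cube class, so one can solve $C_1(\mathbf{y}) = -C_2(\mathbf{z}_0)$ for a suitable fixed $\mathbf{z}_0$. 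The symmetric roles of $C_1, C_2$ then cover $m = 4, 5$ (and, via $m \leftrightarrow 9-m$, would cover $m=5,4$ again), and the case $m = 7, 8$ follows since then $C_1$ has $\geq 7 \geq 4$ variables and $C_2$ has $\leq 2$ variables but is still a non-zero cubic, so $C_1$ represents $-C_2(\mathbf{z}_0)$ for any non-zero value $C_2(\mathbf{z}_0)$.

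The delicate cases are $m = 1$ and $m = 2$, where the split-off form is too small to represent a prescribed value on its own and one must genuinely use the $9 - m \in \{7,8\}$ complementary variables. Here I would follow Heath-Brown's method: write $C = c\,y_1^3 + C_2(\mathbf{z})$ (absorbing an $m=2$ form into a $1$-form plus a binary cubic when possible, or treating the binary cubic directly), pass to the residue field, and run a Hensel-type lifting argument. The core step is to show that the equation $C_2(\mathbf{z}) \equiv -c\,t^3 \pmod{p}$ has a non-singular solution for some $t$, using point counts for cubic hypersurfaces over $\mathbb{F}_p$ (Weil bounds, or Heath-Brown's explicit estimates for the number of zeros of a cubic form in many variables mod $p$): a cubic form in $7$ or $8$ variables over $\mathbb{F}_p$ has so many zeros that after stratifying by the value of $c\,t^3$ one finds a non-singular point in a fibre. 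The main obstacle — and the reason $m = 3, 6$ are excluded — is that when the complementary form has only $3$ (resp.\ $6$, with a $3$-form split off) variables, the mod-$p$ point count is no longer forced to meet every fibre transversally, and indeed the explicit counterexample (\ref{counterexample}) shows no such statement can hold for $(3,3,3)$-type forms; so in those cases the argument genuinely breaks and I would simply exclude them, noting the obstruction is real rather than an artifact of the proof.
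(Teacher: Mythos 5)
The proposal does not establish the proposition, and the core obstruction is a false $p$\nobreakdash-adic fact that is relied on in every case except $m=1,2$. You repeatedly invoke ``a cubic form in $\geq 4$ variables over $\mathbb{Q}_p$ (with $p\neq 3$) has a non-trivial zero'' and, more strongly, that a cubic form in $\geq 4$ (or $\geq 7$) variables over $\mathbb{Q}_p$ represents every value in a fixed cube class, so that one can set $\mathbf z=\mathbf z_0$ and solve $C_1(\mathbf y)=-C_2(\mathbf z_0)$. Neither statement is true. The isotropy threshold for arbitrary cubic forms over $\mathbb{Q}_p$ is $10$ variables (Demyanov, Lewis); anisotropic $p$\nobreakdash-adic cubic forms exist in up to $9$ variables, and the paper's own example (\ref{counterexample}) exhibits one over $\mathbb{Q}_7$. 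Restricting that example to $4$, $5$, $7$ or $8$ of its variables yields anisotropic forms in exactly those counts of variables, and these forms certainly do not represent all units of a cube class (the $3$\nobreakdash-variable block is a norm form, and the $p$\nobreakdash-powers in front of the remaining blocks constrain the valuation of every represented value modulo~$3$). Thus the proposed treatment of every $m\in\{4,5,7,8\}$ collapses, and the symmetric reduction $m\leftrightarrow 9-m$ does not rescue it.

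For $m=1,2$ you gesture at the right source (Heath-Brown's [Proposition 2, HB83]) but the plan you sketch --- write $C=c\,y_1^3+C_2(\mathbf z)$, reduce modulo $p$, and apply Weil-type point counts to find a smooth fibre --- skips the step that makes any such argument work: Heath-Brown's $p$\nobreakdash-adic minimization ([Proposition 1, HB83]). Without it, the reduction of an anisotropic $\mathbb{Z}_p$\nobreakdash-form modulo $p$ may be highly degenerate, and there is no good fibre to count points on. The paper's actual proof applies that minimization to \emph{both} $C_1$ and $C_2$, producing decompositions of the shape $F_1(\mathbf u)+pF_2(\mathbf v)+p^2F_3(\mathbf w)+pG$ with block sizes $r_1+r_2+r_3=m$ and $s_1+s_2+s_3=9-m$, uses Chevalley--Warning to force $r_i,s_j\leq 3$, and then a purely combinatorial pigeonhole argument shows that whenever $m\notin\{3,6\}$ there is an index $i_0$ with $r_{i_0},s_{i_0}\geq 1$ and $(r_{i_0},s_{i_0})\neq(1,1)$, after which the argument of [Proposition 2, HB83] applies to $F_{i_0}+F'_{i_0}$. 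Correspondingly, the reason $m=3,6$ are excluded is not a ``non-transversal fibre'' phenomenon as you suggest; it is simply that the partitions $(3,0,0)$ and $(0,3,3)$ (and their symmetric images) can avoid sharing any index at all, so no usable block $F_{i_0}+F'_{i_0}$ exists. Your instinct that the exclusion is genuine (not an artifact) is right --- (\ref{counterexample}) is also a $(3,6)$\nobreakdash-form --- but the mechanism you describe is not the one in play.
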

\begin{proof}
Suppose $C$ has shape (\ref{split1}). By \cite[Proposition 1]{HB83}, either $C_1$ represents zero or there is a non-singular linear transformation sending $C_1$ to a form
\begin{displaymath}
D_1(u_1,\ldots,u_{r_1},v_1,\ldots,v_{r_2},w_1,\ldots,w_{r_3})=F_1(\mathbf{u})+pF_2(\mathbf{v})+p^2 F_3(\mathbf{w})+p G(\mathbf{u},\mathbf{v},\mathbf{w}),
\end{displaymath}
where $r_1,r_2,r_3\geq 0$ and $r_1+r_2+r_3=m$. And $F_1,F_2,F_3,G$ are forms with coefficients in $\mathbb{Z}$ with the following properties.

(\romannumeral1) $F_1$ involves the variables $u_i$ only, and similar for $F_2,F_3$. Terms involving the variables $u_i$ only (or the $v_i$ only, or the $w_i$ only) are absent from $G$.

(\romannumeral2) The congruence $F_1(\mathbf{u})\equiv 0 (\text{mod }p)$ has only the solution $\mathbf{u}\equiv \mathbf{0} (\text{mod }p)$, and similarly for $F_2,F_3$.

(\romannumeral3) In $G$, the coefficients of the monomials $u_iw_jw_k, v_iw_jw_k,w_iv_jv_k$ (where $i,j,k$ need not be distinct) are multiples of $p$.

Similarly, if $C_2$ does not represent zero, we write $C_2$ as
\begin{displaymath}
D_2(u_1^\prime,\ldots,u_{s_1}^\prime,v_1^\prime,\ldots,v_{s_2}^\prime,w_1^\prime,\ldots,w_{s_3}^\prime)=F_1^\prime(\mathbf{u}^\prime)+pF_2^\prime(\mathbf{v}^\prime)+p^2 F_3^\prime(\mathbf{w}^\prime)+p G^\prime(\mathbf{u}^\prime,\mathbf{v}^\prime,\mathbf{w}^\prime),
\end{displaymath}
where $s_1,s_2,s_3\geq 0$ and $s_1+s_2+s_3=9-m$. And $F_1^\prime,F_2^\prime,F_3^\prime,G$ are forms with coefficients in $\mathbb{Z}$ with similar properties. Hence $C$ is equivalent to
\begin{displaymath}
\left(F_1(\mathbf{u})+F_1^\prime(\mathbf{u}^\prime)\right)+p\left(F_2(\mathbf{v})+F_2^\prime(\mathbf{v}^\prime)\right)+p^2 \left( F_3(\mathbf{w})+ F_3^\prime(\mathbf{w}^\prime)\right)+p \left( G(\mathbf{u},\mathbf{v},\mathbf{w})+ G^\prime(\mathbf{u}^\prime,\mathbf{v}^\prime,\mathbf{w}^\prime)\right).
\end{displaymath}

By Chevalley's Theorem (see \cite[p.5]{Ser}) and the property (\romannumeral2), $D_1$ represents zero non-trivially unless $r_1,r_2,r_3\leq 3$. Similarly we can assume that $s_1,s_2,s_3\leq 3$. Since $m$ and $9-m$ do not take value $3$ or $6$, it can be deduced that there exists some $i_0\in\{1,2,3\}$ so that the value of the couple $(r_{i_0}, s_{i_0})$ belongs to $\{(2,1),(1,2),(3,1),(1,3),(2,3),(3,2),(2,2),(3,3)\}$. Without loss of generality, we suppose $(r_{i_0}, s_{i_0})=(2,1)$. (In other cases we can set the redundant variables 0 or change the order. And property (\romannumeral2) ensures that if appropriate variables are chosen, the induced form will not be identically zero.) Now $F_{i_0}+F^\prime_{i_0}$ is the sum of a $2$-form and a $1$-form, and the arguments of \cite[Proposition 2]{HB83} leads to the conclusion that $F_{i_0}+F^\prime_{i_0}$ represents zero non-trivially in $\mathbb{Q}_p^9$. Let the variables that do not appear in $F_{i_0}$ and $F^\prime_{i_0}$ be zero. From the second statement of the property (\romannumeral1), it can be asserted that the terms $G$ and $G^\prime$ vanish. Hence $C$ also has non-trivial $\mathbb{Q}_p$- solutions.
\end{proof}
%
%and the fact that local conditions automatically holds when $n\geq 10$
%
Combing Theorem H, Theorem CS and Proposition \ref{PR1}, we obtain the following corollaries.

%
%\begin{corollary} \label{CO1}
%Let $X\subseteq \mathbb{P}^{n-1}$ be a cubic hypersurface with at most isolated ordinary (double) points as singularities and suppose $n\geq 10$. Then $X(\mathbb{Q})\neq \emptyset$.
%\end{corollary}
%
\begin{corollary} \label{CO1}
Let $X\subseteq \mathbb{P}^{8}$ be a cubic hypersurface defined by a nonary cubic form that splits off an $m$-form, with $m\in [1,8]\setminus \{3,6\}$. Suppose that $X$ possesses at most isolated ordinary (double) points as singularities. Then $X(\mathbb{Q})\neq \emptyset$.
\end{corollary}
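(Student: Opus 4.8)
The plan is to combine the arithmetic input of Proposition \ref{PR1} with the two Hasse-principle theorems quoted above, Theorem H and Theorem CS, by a case analysis on the nature of the singular locus of $X$. First I would recall that $X\subseteq\mathbb{P}^8$ is assumed to have at most finitely many singular points, each of them an ordinary double point; in particular the singular locus is $0$-dimensional. The dichotomy I want is: either every singular point of $X$ is defined over $\mathbb{Q}$, or $X$ contains a Galois-stable set of conjugate singular points. Since the singular points are cut out over $\overline{\mathbb{Q}}$ by the system $\nabla C=0$ together with $C=0$, the full set of singular points is finite and $\mathrm{Gal}(\overline{\mathbb{Q}}/\mathbb{Q})$-stable, so it breaks into Galois orbits. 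If some orbit has size at least $3$, or more precisely if $X$ contains a set of three conjugate singular points, then Theorem CS applies directly once we know $X(\mathbb{Q}_p)\neq\emptyset$ for all $p$, and we are done. The same is true if there is a single rational singular point, since then $X$ is a cone over a point — any line through that point meets $X$ again in a rational point, or more simply the tangent cone structure at an ordinary double point forces a rational point. So the only genuinely new case is an orbit of size exactly $2$: a conjugate pair of singular points and no rational singular point and no orbit of size $\geq 3$.

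The second step is to handle the local solubility hypothesis feeding into these theorems. Both Theorem H and Theorem CS require $X(\mathbb{Q}_p)\neq\emptyset$ for every prime $p$ (archimedean solubility is automatic for an odd-degree form), and this is exactly what Proposition \ref{PR1} supplies: since $C$ splits off an $m$-form with $m\in[1,8]\setminus\{3,6\}$, the form $C$ viewed over $\mathbb{Q}_p$ represents zero non-trivially in $\mathbb{Q}_p^9$ for every $p$, so $X(\mathbb{Q}_p)\neq\emptyset$ for all $p$. Thus in the two cases already discussed — three conjugate singular points, or isolated ordinary double points with the local conditions — the hypotheses of Theorem CS, resp. Theorem H, are met and $X(\mathbb{Q})\neq\emptyset$.

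For the remaining case of a conjugate pair of ordinary double points $P,\bar P$ defined over a quadratic field $K=\mathbb{Q}(\sqrt{d})$, I would pass to the line $\ell$ through $P$ and $\bar P$. This line is defined over $\mathbb{Q}$ (it is the unique line joining a conjugate pair, hence Galois-stable, and a line in $\mathbb{P}^8$ stable under Galois and containing no rational point is still defined over $\mathbb{Q}$ as a subvariety). A line meets the cubic $X$ in a degree-$3$ divisor; since $P$ and $\bar P$ are double points of $X$, each of them is a point of multiplicity at least $2$ in the intersection $\ell\cap X$, so the intersection divisor has degree $\geq 4$ unless $\ell\subseteq X$. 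Because a line meets a cubic hypersurface in a degree-$3$ scheme unless it lies on it entirely, we conclude $\ell\subseteq X$. Then every rational point of $\ell\cong\mathbb{P}^1_{\mathbb{Q}}$ is a rational point of $X$, and $X(\mathbb{Q})\neq\emptyset$. (Alternatively, Theorem H could be invoked here too, treating the pair as ``isolated ordinary double points,'' but the line argument is cleaner and self-contained.)

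The main obstacle I expect is the bookkeeping around the geometric claim in the last case — making precise that a conjugate pair of double points on a cubic forces a rational line on the hypersurface, and that the relevant intersection-multiplicity count is valid (one must rule out the degenerate possibility that $\ell$ is not reduced in $X$ or that $C$ restricted to $\ell$ vanishes to lower order than the double-point condition naively suggests). This is standard intersection theory for lines on cubics, but it needs the hypothesis that the singularities are \emph{ordinary} double points so that the tangent cone at each point is a genuine quadric cone and the multiplicity of $\ell\cap X$ at $P$ is exactly $2$ when $\ell$ is not in the tangent cone, and at least $2$ always; summing $2+2>3$ over the pair $P,\bar P$ then forces $\ell\subseteq X$. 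Once that geometric lemma is in hand, the corollary follows by assembling the three cases, with Proposition \ref{PR1} supplying local solubility throughout.
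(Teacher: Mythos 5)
Your proposal contains the right ingredients (Proposition~\ref{PR1} for local solubility, Theorem~H/CS for the Hasse principle), and indeed the line argument for a conjugate pair of double points is correct, but you have made the proof far more complicated than it need be, and in doing so introduced a case analysis that is neither necessary nor exhaustive. The key observation you are missing is that the hypothesis of Corollary~\ref{CO1} is \emph{verbatim} the geometric hypothesis of Theorem~H: $X\subseteq\mathbb{P}^8$ possesses at most isolated ordinary double points as singularities. No dichotomy on the Galois structure of the singular locus is required. The entire proof is: Proposition~\ref{PR1} gives $X(\mathbb{Q}_p)\neq\emptyset$ for every prime $p$ (and $X(\mathbb{R})\neq\emptyset$ is automatic for a cubic), so Theorem~H applies directly and yields $X(\mathbb{Q})\neq\emptyset$. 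The paper's proof is precisely this one-line deduction; Theorem~CS is reserved for Corollary~\ref{CO2}, whose hypothesis is different.

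Beyond being unnecessary, your trichotomy ``single rational singular point / orbit of size $\geq 3$ / conjugate pair'' is not exhaustive: it omits the non-singular case (where there are no singular points at all), and it omits Galois orbits of size $4,5,\dots$, which you cannot fold into the Theorem~CS case — Colliot-Th\'el\`ene--Salberger's ``set of three conjugate singular points'' refers to a Galois-stable triple arising from a cubic field, not to any three points chosen from a larger orbit. You do remark parenthetically that Theorem~H ``could be invoked here too, treating the pair as isolated ordinary double points,'' which is the correct instinct; had you followed it through, you would have seen that Theorem~H handles every case at once and that the case analysis, the line-through-conjugate-points argument, and the intersection-multiplicity bookkeeping can all be deleted.
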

\begin{corollary} \label{CO2}
Let $X\subseteq \mathbb{P}^{8}$ be a cubic hypersurface defined by a nonary cubic form that splits off an $m$-form, with $m\in [1,8]\setminus \{3,6\}$. Suppose that $X$ contains a set of three conjugate singular points. Then $X(\mathbb{Q})\neq \emptyset$.
\end{corollary}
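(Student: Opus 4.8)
The plan is to reduce to Theorem CS by the same Proposition 1 of Heath-Brown that powers Proposition \ref{PR1}. If the nonary form $C$ represents zero non-trivially over $\mathbb{Q}$, or if any form split off by $C$ does (in particular $C_1$ or $C_2$), we are immediately done, so assume otherwise. Then by hypothesis $X$ contains a set of three conjugate singular points, say defined over a cubic extension $K/\mathbb{Q}$, and by Theorem CS it suffices to verify that $X(\mathbb{Q}_p)\neq\emptyset$ for every prime $p$. The archimedean condition $X(\mathbb{R})\neq\emptyset$ is automatic for a cubic form in an odd number of variables. Thus the entire content of the corollary is the local solubility at finite primes, which is exactly what Proposition \ref{PR1} supplies: since $C$ splits off an $m$-form with $m\in[1,8]\setminus\{3,6\}$, it represents zero non-trivially in $\mathbb{Q}_p^9$ for every $p$. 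Combining these observations gives $X(\mathbb{Q})\neq\emptyset$.

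Writing this out, the steps in order are: (i) dispose of the trivial case where $C$, $C_1$ or $C_2$ has a non-trivial rational zero; (ii) note $X(\mathbb{R})\neq\emptyset$ because $n=9$ is odd, so every real cubic form is isotropic over $\mathbb{R}$; (iii) invoke Proposition \ref{PR1} to get $X(\mathbb{Q}_p)\neq\emptyset$ for all primes $p$; (iv) apply Theorem CS, whose remaining hypothesis — that $X$ contains a set of three conjugate singular points — is exactly what we have assumed.

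I do not expect a genuine obstacle here: the corollary is essentially a bookkeeping combination of Theorem CS with Proposition \ref{PR1}, entirely parallel to Corollary \ref{CO1} (which instead feeds into Theorem H). The only point requiring a word of care is that Theorem CS as quoted demands local solubility \emph{and} the conjugate-singular-points condition, so one must be sure Proposition \ref{PR1} really covers \emph{all} primes $p$ with no exceptional set — which it does, its statement being uniform in $p$ — and that the singular points remain singular on $X$ itself (not on some auxiliary variety), which is immediate since the hypothesis is phrased directly about $X$. Hence the proof is short.
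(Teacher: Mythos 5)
Your proposal is correct and matches the paper's intended argument exactly: the paper simply remarks that Corollaries \ref{CO1} and \ref{CO2} follow by combining Theorem H, Theorem CS and Proposition \ref{PR1}, and you have filled in that bookkeeping in the only sensible way (Proposition \ref{PR1} supplies $X(\mathbb{Q}_p)\neq\emptyset$ for all finite $p$, which together with the hypothesis on three conjugate singular points lets Theorem CS conclude $X(\mathbb{Q})\neq\emptyset$). One cosmetic remark: real solubility of a cubic form holds for \emph{any} $n\geq 2$, not just odd $n$, since the form changes sign under $\mathbf{x}\mapsto-\mathbf{x}$ and the sphere is connected; but this has no bearing on the correctness of your argument.
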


Given a cubic extension $K$ of $\mathbb{Q}$, define the corresponding norm form
\begin{equation}
N(x_1,x_2,x_3):=N_{K/\mathbb{Q}}(\omega_1x_1+\omega_2x_2+\omega_3x_3),
\end{equation}
where $\{\omega_1,\omega_2,\omega_3\}$ is a basis of $K$ as a vector space over $\mathbb{Q}$.

For small $n$, the geometry of $X\subseteq \mathbb{P}^{n-1}$ defined by a cubic form is not hard to determine. The following lemma collects \cite[Lemma 2-4]{Browning1}.
\begin{lemma} \label{LE1}
Suppose $X\subseteq \mathbb{P}^{n-1}$ is defined by $C=0$ with $C\in \mathbb{Z}[x_1,\ldots,x_n]$ a cubic form and $X(\mathbb{Q})=\emptyset$.

(\romannumeral1) For $n=3$, either the curve $X$ is non-singular or $X$ contains precisely three conjugate singular points. In particular in the latter case, $C$ can be written as a norm form, i.e.,
\begin{displaymath}
C(\mathbf{x})=\text{N}_{K/\mathbb{Q}}(x_1\omega_1+x_2\omega_2+x_3\omega_3),
\end{displaymath}
for some appropriate coefficients $\omega_1,\omega_2,\omega_3\in K$, where $K$ is the cubic number field obtained by adjoining one of the singularities.

(\romannumeral2) For $n=4$, either the surface $X$ is non-singular or $X$ contains precisely three conjugate double points. In particular in the latter case we have the representation
\begin{displaymath}
C(\mathbf{x})=\text{N}_{K/\mathbb{Q}}(x_1\omega_1+x_2\omega_2+x_3\omega_3)+ax_4^2\text{Tr}_{K/\mathbb{Q}}(x_1\omega_1+x_2\omega_2+x_3\omega_3)+bx_4^3,
\end{displaymath}
for some appropriate coefficients $\omega_1,\omega_2,\omega_3\in K$ and $a,b\in \mathbb{Z}$.

(\romannumeral3) For $n=5$, either the threefold $X$ is non-singular or $X$ is a geometrically integral cubic hypersurface whose singular locus contains precisely $\delta$ double points, with $\delta\in \{3,6,9\}$.
\end{lemma}
%
%\begin{proof}[of Theorem \ref{TH4}]
%Notice that $C$ splits off a non-singular $m$-form with $m\geq 5$ and $n\geq 10$. We suppose $C$ is of the shape (\ref{split1}). In view of Theorem B, we can suppose that $m\leq 7$. Let $n-10$ variables of $C_2$ be zero. Then $C_2$ induces a cubic form $C_2^\prime$ in $m^\prime:=10-m\in \{3,4,5\}$ variables and $C$ becomes $C^\prime=C_1+C_2^\prime$ that possesses 10 variables. We can suppose that $X_2^\prime(\mathbb{Q})= \emptyset$, otherwise $X^\prime$ has a $\mathbb{Q}$-point and so does $X$. Now by Lemma \ref{LE1}, $X_2^\prime$ has at most isolated double points and so does $X^\prime$. Corollary \ref{CO1} ensures that $X^\prime(\mathbb{Q})\neq \emptyset$ and then $X(\mathbb{Q})\neq \emptyset$. Theorem \ref{TH4} follows.
%\end{proof}

\begin{proof}[of Theorem \ref{TH2}]
We first prove Theorem \ref{TH2}(\romannumeral2). Suppose that $C$ has the shape (\ref{split1}). It is sufficient to handle the case $(m_1,m_2)=(4,5)$. By Lemma \ref{LE1}(\romannumeral2), either $X_1$ is non-singular or $X_1$ contains precisely three conjugate singular points. In the former case, $X_2$ has at most isolated double points (by Lemma \ref{LE1}(\romannumeral3)) and so does $X$. Since $C$ splits off a $4$-form. Corollary \ref{CO1} ensures that $X(\mathbb{Q})\neq \emptyset$. In the latter case, $X$ contains a set of three conjugate singular points and $C$ splits off a $4$-form. Corollary \ref{CO2} shows that $X(\mathbb{Q})\neq \emptyset$ also holds.

As for Theorem \ref{TH2}(\romannumeral1), the cases $m_1=4$ or $5$ can be implied from Theorem \ref{TH2}(\romannumeral2). The cases $m_1\geq 9$ can be deduced from \cite[Theorem 2]{Browning1}. For $m_1=7$ or $8$. It is sufficient to handle $(m_1,9-m_1)$, i.e., we put $m_2=9-m_1$. Then $m_2=1$ or $2$. Since $C_1$ is non-singular, it follows that $C$ is a non-singular cubic form in 9 variables that splits off an $m_2$-form. By Corollary \ref{CO1}, it can be conclude that $X(\mathbb{Q})\neq \emptyset$. At last we suppose $m_1=6$ and $m_2\geq 4$. This case is dealt with in \cite{Bro12}. It can also be reduced to the case $(5,4)$, which is implied by Theorem \ref{TH2}(\romannumeral2).
\end{proof}

\iffalse
\begin{proof}[of the case $(n_1,n_2,n_3)\in\{(1,3,5)\}$]
By Lemma \ref{LE1}(\romannumeral1), either $X_2$ is non-singular or $X_2$ contains precisely three conjugate singular points.
 In the former case, $X_3$ has at most isolated double points (by Lemma \ref{LE1}(\romannumeral3)) and so does $X_0$. Since $C_0$ splits off a $1$-form. Corollary \ref{CO1} ensures that $X_0(\mathbb{Q})\neq \emptyset$. In the latter case, $X_0$ contains a set of three conjugate singular points and $C_0$ splits off a $1$-form. Corollary \ref{CO2} shows that $X_0(\mathbb{Q})\neq \emptyset$ also holds.
\end{proof}
%
\begin{proof}[of the case $(n_1,n_2,n_3)\in\{(1,4,4),(2,3,4)\}$]
By Lemma \ref{LE1}(\romannumeral1,\romannumeral2), either both $X_2$ and $X_3$ are non-singular or at least one of them contains a set of three conjugate singular points. Note that $C_0$ splits a $4$-form. One obtains $X_0(\mathbb{Q})\neq \emptyset$ by Corollary \ref{CO1} in the former case and by Corollary \ref{CO2} in the latter case.
\end{proof}
%
\begin{proof}[of the case $(n_1,n_2,n_3)\in\{(2,2,5)\}$]
By lemma 2(\romannumeral3), $X_3$ has at most isolated double points and so does $X_0$. Corollary \ref{CO1} shows that $X_0(\mathbb{Q})\neq \emptyset$.
\end{proof}
\fi
%
%=====================   section 3   =========================
\section{The circle method}

For Theorem \ref{TH1} it is suffice to handle the case $n_0=11$, since when $n_0>11$ we can simply force the redundant variables to be $0$. Most of the results outlined in \S1 involve the circle method in the proof. We apply it to deal with $(1,1,9)$ and $(1,2,8)$-forms.

We use $C_0\in \mathbb{Z}[x_1,\ldots,x_{n_0}]$ to denote the cubic form stated in Theorem \ref{TH1}, which defines $X_0\subseteq \mathbb{P}^{n_0-1}$. Let $n_1,n_2,n_3\geq 1$ be integers such that $n_1+n_2+n_3=n_0$. Since $C_0$ splits off two forms, we write
\begin{equation}
C_0(\mathbf{x})=C_1(\mathbf{x}_1)+C_2(\mathbf{x}_2)+C_3(\mathbf{x}_3), \label{split}
\end{equation}
where $C_i\in \mathbb{Z}[\mathbf{x}_i]\,(1\leq i\leq 3)$ are cubic forms in $n_i$ variables and define $X_i\subseteq \mathbb{P}^{n_i-1}$, respectively. Without loss of generality, we assume that $n_1\leq n_2\leq n_3$.

If $C$ is degenerate, then $C=0$ has obvious non-zero integer solutions. If $C$ has no less than 3 variables and is not `good', then $C=0$ also has non-zero integer solutions for `geometric reasons' (see \cite{Dav}). Here a general cubic form $C$ is `good' means that for any $H\geq 1$ and any $\varepsilon>0$, the upper bound
\begin{displaymath}
\#\{\mathbf{x}\in \mathbb{Z}^n:|\mathbf{x}|\leq H, \text{rank}H(\mathbf{x})=r\}\ll H^{r+\varepsilon}
\end{displaymath}
holds for each integer $0\leq r\leq n$, where $H(\mathbf{x})$ is the Hessian matrix of $C$. Any cubic form defining a hypersurface
with at most isolated ordinary singularities is good, which is due to Hooley\cite{Hoo88}. Moreover, if $C_i=0$ for some $1\leq i\leq 3$ or $C_i+C_j=0$ for some $1\leq i<j\leq 3$ has non-trivial integer solutions, then we easily see that $C_0=0$ has non-trivial integer solutions. Now we can suppose that none of $C_j$ $(0\leq j\leq 3)$ and $C_i+C_j$ $(1\leq i<j\leq 3)$ has integer solutions, is degenerate, or is not `good' whenever no less than 3 variables are possessed.

Write $e(x):=e^{2\pi i x}$. Define the cubic exponential sum
\begin{displaymath}
S(\alpha)=S(\alpha;C,n,\rho,P):=\sum\limits_{\mathbf{x}\in\mathbb{Z}^n\atop |P^{-1}\mathbf{x}-\mathbf{z}|< \rho} e(\alpha C(x)),
\end{displaymath}
where $\mathbf{z}$ is a fixed vector and $\rho>0$ is a fixed real number, both to be determined later. The precise value of $\mathbf{z}$ and $\rho$ are actually immaterial and the corresponding implied constants are allowed to depend on these quantities. Let $S_i(\alpha):=S(\alpha;C_i,n_i,\rho,P)$ for $0\leq i\leq 3$. From (\ref{split}), one has
\begin{displaymath}
S_0(\alpha)=S_1(\alpha)S_2(\alpha)S_3(\alpha).
\end{displaymath}
Write
\begin{displaymath}
\mathcal{N}(P):=\#\{\mathbf{x}\in \mathbb{Z}^{n_0}: |P^{-1}\mathbf{x}-\mathbf{z}|< \rho, \; C_0(\mathbf{x})=0\}.
\end{displaymath}
On observing the simple equality
\begin{displaymath}
\int_0^1 e(\alpha x)d\alpha =
\begin{cases}
1,\quad \text{if }x=0,\\
0,\quad \text{if }x\neq 0,
\end{cases}
\end{displaymath}
the number of solutions of $C_0(\mathbf{x})=0$ counted by $\mathcal{N}(P)$ is exactly
\begin{displaymath}
\int_0^1{S_0(\alpha)}d\alpha.
\end{displaymath}
Next we divide the integral domain into two parts where different tools can be applied. Define the major arcs as
\begin{displaymath}
\mathfrak{M}:=\bigcup\limits_{q\leq P^\Delta}\bigcup\limits_{(a,q)=1}\left[\frac{a}{q}-P^{-3+\Delta},\frac{a}{q}+P^{-3+\Delta}\right]
\end{displaymath}
and the minor arcs as $\mathfrak{m}:=[0,1]\setminus \mathfrak{M}$, where $\Delta$ is a small positive integer to be specified later. The intervals in the major arcs are pairwise disjoint. The integral becomes
\begin{displaymath}
\int_0^1{S(\alpha)}d\alpha=\int_{\mathfrak{M}}{S(\alpha)}d\alpha+\int_{\mathfrak{m}}{S(\alpha)}d\alpha.
\end{displaymath}
If the first term on the right side (known as the main term) takes positive value and overwhelms the second term (known as the error term) for sufficiently large $P$, then we reach the declaration that $\mathcal{N}(P) \gg P^{\tau}$ ($\tau$ can be 8 according to Lemma \ref{LE2} below) and then $C_0=0$ has non-trivial integer solutions. As a result, we have $X_0(\mathbb{Q})\neq \emptyset$ and Theorem \ref{TH1} follows.

The following lemma ensures that the integral over major arcs are `large'.
\begin{lemma} \label{LE2}
Let $n_0=11$. We have
\begin{displaymath}
\int_\mathfrak{M} S_0(\alpha)d\alpha= \mathfrak{S}\mathfrak{J}P^{n_0-3}+\textit{o}\left(P^{n_0-3}\right),
\end{displaymath}
where
\begin{displaymath}
\mathfrak{S}:=\sum\limits_{q=1}^\infty \sum\limits_{(a,q)=1}q^{-n_0}S_{a,q}(C_0),\quad \mathfrak{J}:=\int_{-\infty}^\infty I(\beta;C_0) d\beta,
\end{displaymath}
with
\begin{displaymath}
S_{a,q}=S_{a,q}(C):=\sum\limits_{\mathbf{y} (\text{mod }q)} e_q(a C(\mathbf{y})),\quad I(\beta)=I(\beta;C):=\int_{|P^{-1}\mathbf{x}-\mathbf{z}|<\rho} e(\beta C(\mathbf{x}))d\mathbf{x}.
\end{displaymath}
\end{lemma}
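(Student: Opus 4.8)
The plan is to run the standard Hardy--Littlewood major-arc analysis, in the cubic form used in \cite{Browning1}, exploiting at every turn the splitting $C_0=C_1+C_2+C_3$, which furnishes the three product identities $S_0(\alpha)=S_1(\alpha)S_2(\alpha)S_3(\alpha)$, $S_{a,q}(C_0)=\prod_{i=1}^{3}S_{a,q}(C_i)$ and $I(\beta;C_0)=\prod_{i=1}^{3}I(\beta;C_i)$. The first step is the pointwise approximation on a single arc: for $q\le P^{\Delta}$, $(a,q)=1$ and $|\beta|\le P^{-3+\Delta}$, one splits each summation variable into residue classes modulo $q$, observes that $e_q(aC_i(\cdot))$ depends only on the residue, and compares the remaining sum with an integral over sub-boxes of side $q$; since the phase $\beta C_i$ has gradient $\ll P^{2}|\beta|$ on the range of summation, this gives
\[
S_i\Bigl(\tfrac aq+\beta\Bigr)=q^{-n_i}S_{a,q}(C_i)\,I(\beta;C_i)+O\bigl(qP^{n_i-1}(1+P^{3}|\beta|)\bigr)\qquad(1\le i\le 3).
\]
Because $S_i$ and its main term are both $\ll P^{n_i}$, multiplying the three relations and expanding the product yields
\[
S_0\Bigl(\tfrac aq+\beta\Bigr)=q^{-n_0}S_{a,q}(C_0)\,I(\beta;C_0)+O\bigl(qP^{n_0-1}(1+P^{3}|\beta|)\bigr).
\]

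Next I would integrate this over $\mathfrak M$. For the error term, summing over the $\le q$ admissible $a$, over $q\le P^{\Delta}$, and using $\int_{|\beta|\le P^{-3+\Delta}}(1+P^{3}|\beta|)\,d\beta\ll P^{-3+2\Delta}$, the total is $\ll P^{n_0-1}P^{-3+2\Delta}\sum_{q\le P^{\Delta}}q^{2}\ll P^{n_0-4+O(\Delta)}$, which is $o(P^{n_0-3})$ once $\Delta$ is fixed small enough; this is where the smallness condition on $\Delta$ enters. There remains the main term
\[
\sum_{q\le P^{\Delta}}\ \sum_{\substack{a\bmod q\\(a,q)=1}}q^{-n_0}S_{a,q}(C_0)\int_{|\beta|\le P^{-3+\Delta}}I(\beta;C_0)\,d\beta,
\]
and I would finish by completing the two truncations. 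For the $\beta$-integral, the oscillatory estimate $I(\beta;C_0)\ll P^{n_0}(1+P^{3}|\beta|)^{-\kappa}$ for some $\kappa>1$ --- proved by repeated integration by parts, legitimate because the fixed $\mathbf z$ and $\rho$ may be chosen so that the box stays away from the critical locus of $C_0$, which the `good' hypotheses on the $C_i$ keep suitably small --- shows both that $\mathfrak J$ converges and that enlarging the range to $(-\infty,\infty)$ costs only $o(P^{n_0-3})$. For the $q$-sum, the complete-exponential-sum bounds recorded in \S4, fed through $S_{a,q}(C_0)=\prod_iS_{a,q}(C_i)$ and the fact that in the cases (\ref{cases}) one has $n_3\in\{8,9\}$, give $\sum_{(a,q)=1}|S_{a,q}(C_0)|\ll q^{n_0-1-\eta}$ for some $\eta>0$; hence $\mathfrak S$ converges absolutely and the tail $\sum_{q>P^{\Delta}}q^{-n_0}\sum_{(a,q)=1}|S_{a,q}(C_0)|\ll P^{-\eta\Delta}$ contributes $o(P^{n_0-3})$ after multiplication by $\int_{|\beta|\le P^{-3+\Delta}}I(\beta;C_0)\,d\beta\ll P^{n_0-3}$. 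Collecting the pieces gives the asserted formula.

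The main obstacle is not any single estimate but the tightness of $n_0=11$: both completions above are only barely $o(P^{n_0-3})$, and for a general cubic form in $11$ variables neither $\mathfrak S$ nor $\mathfrak J$ need converge. It is the splitting hypothesis together with the `good' conditions on $C_1,C_2,C_3$ that saves the argument --- the multiplicativity of $S_{a,q}$ and the factorisation of $I(\beta;\cdot)$ turn the sharp bounds for cubic exponential sums in at most $9$ variables into bounds for $C_0$ that are just adequate. The delicate point is then essentially bookkeeping: fixing $\Delta$ (and later $\delta,\varepsilon$) small enough that all of these error terms, and the minor-arc estimate dealt with in \S5, are simultaneously admissible.
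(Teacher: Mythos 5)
Your overall scheme — pointwise approximation of $S_0$ on an individual arc, integration over $\mathfrak M$, then completion of the $q$-sum and $\beta$-integral — is indeed the standard argument the paper invokes by citing Davenport and Heath--Brown, so the architecture is fine. You part company with the paper, however, on the one step that is actually non-trivial for $n_0=11$, namely the absolute convergence of $\mathfrak S$, and the route you sketch for it has a real hole.

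The paper does not use the factorisation $S_{a,q}(C_0)=\prod_i S_{a,q}(C_i)$ at this point. It treats $C_0$ as a single \emph{good} cubic form in $11$ variables, quotes the pointwise bound $S_{a,q}(C_0)\ll q^{5n_0/6+\varepsilon}$, and then invokes the \emph{averaged} estimate of Heath--Brown (\cite[Theorem 4]{HB07}) to obtain absolute convergence of $\mathfrak S$ for $n_0=11$. Averaging is essential: the pointwise bound alone gives $\sum_{(a,q)=1}q^{-n_0}|S_{a,q}(C_0)|\ll q^{1-n_0/6+\varepsilon}=q^{-5/6+\varepsilon}$, whose sum over $q$ diverges. (The product-over-pieces idea is used later in \S 6 for the $(1,1,1,7)$ case, where three $1$-forms contribute $q^{2/3}$ each.)

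Your alternative, using multiplicativity and bounds on the factor sums, cites ``the complete-exponential-sum bounds recorded in \S 4'', but \S 4 records only $L^v$-moment bounds on the incomplete sums $S(\alpha)$ and contains no estimates for the complete Gauss sums $S_{a,q}$ at all. More to the point, feeding the generic ``good''-form bound $q^{5n_i/6+\varepsilon}$ into the product reproduces exactly $q^{5n_0/6+\varepsilon}$ and gains nothing. You must therefore use sharper bounds for the low-dimensional pieces. For $(1,1,9)$ Hua's $S_{a,q}\ll q^{2/3}$ for a single cube gives $S_{a,q}(C_0)\ll q^{2/3+2/3+45/6+\varepsilon}=q^{53/6+\varepsilon}$, hence $\sum_{(a,q)=1}|S_{a,q}(C_0)|\ll q^{59/6+\varepsilon}$, comfortably below $q^{10}$. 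But for $(1,2,8)$ the analogous computation $q^{2/3}\cdot q^{5/3}\cdot q^{20/3}=q^{9}$ gives $\sum_{(a,q)=1}|S_{a,q}(C_0)|\ll q^{10+\varepsilon}$, which is \emph{not} $\ll q^{n_0-1-\eta}$, so your claimed inequality fails. To rescue it one would need a genuine improvement on $q^{5/3}$ for non-degenerate binary cubic Gauss sums (something like $q^{3/2+\varepsilon}$), a fact that you would have to state and justify explicitly and that is not contained anywhere in the paper you are reconstructing. As written, then, the proposal silently assumes an unproved pointwise bound, and this is the gap.

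A smaller remark: the phrasing ``the box stays away from the critical locus of $C_0$'' is not what happens. The box is \emph{centred} at a real zero $\mathbf z$ of $C_0$; what one arranges (by requiring that $C_3$ have no rational linear factor and taking $\mathbf z_3$ a non-singular real zero of $C_3$) is that $\mathbf z$ is a \emph{non-singular} zero of $C_0$, i.e.\ $\nabla C_0(\mathbf z)\neq 0$. It is this, together with the ``good''-form decay $I(\beta;C_0)\ll P^{n_0}(P^3|\beta|)^{-n_0/8}$ (with exponent $11/8>1$), that gives both convergence and positivity of $\mathfrak J$, as in \cite{Dav}, \S 16 and \cite{HB83}, \S 4.
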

The proof of Lemma \ref{LE2} uses standard arguments. One can see \cite[Lemma 15.4, \S16-18]{Dav} or \cite[Lemma 2.1]{HB07} for details. Since $C_0$ is good, Heath-Brown's bound (see \cite[(7.1)]{HB07}) $S_{a,q}(C_0)\ll q^{5n_0/6+\varepsilon}$ is effective and \cite[Theorem 4]{HB07} ensures that $\mathfrak{S}$ is absolutely convergent for $n_0=11$. Standard argument (see \cite[Lemma 7.3]{Dav59}) leads to $\mathfrak{S}>0$. Assuming that $C_3$ does not have a linear factor defined over $\mathbb{Q}$ (otherwise non-trivial integer solutions can be found easily), it is possible to choose appropriate $\mathbf{z}_3$ in the definition of $S_3$ so that $\mathbf{z}_3$ is a non-singular real solution to $C_3=0$. Now pick $\mathbf{z}_1=\mathbf{z}_2=\mathbf{0}$, then $\mathbf{z}=(\mathbf{z}_1,\mathbf{z}_2,\mathbf{z}_3)$ is a non-singular real solution to equation $C_0=0$. On selecting a sufficiently small value of $\rho>0$, we will have $\mathfrak{J}>0$. (See \cite[\S16]{Dav} and \cite[\S4]{HB83} for details.)

Now we only need to show that the integral over the minor arcs is `small' according to that over the major arcs.
\begin{proposition} \label{PR2}
Suppose that  $(n_1,n_2,n_3)\in \{(1,1,9),(1,2,8)\}$. Then either $X_i(\mathbb{Q})\neq \emptyset$ for some $1\leq i\leq 3$, or
\begin{displaymath}
\int_\mathfrak{m}S_{n_0}(\alpha)d\alpha=\textit{o}\left(P^{n_0-3}\right).
\end{displaymath}
\end{proposition}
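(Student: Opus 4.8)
The plan is to bound the minor arc integral by combining pointwise estimates for the "large" factor $S_3(\alpha)$ (coming from the nine- or eight-variable form $C_3$) with mean-value estimates for the "small" factors $S_1(\alpha)$ and $S_2(\alpha)$ (coming from the one- and two-variable forms). Since $S_0(\alpha)=S_1(\alpha)S_2(\alpha)S_3(\alpha)$, on the minor arcs $\mathfrak{m}$ we can write
\begin{displaymath}
\left|\int_{\mathfrak{m}}S_0(\alpha)\,d\alpha\right|\leq \left(\sup_{\alpha\in\mathfrak{m}}|S_3(\alpha)|^{\theta}\right)\int_0^1 |S_1(\alpha)S_2(\alpha)||S_3(\alpha)|^{1-\theta}\,d\alpha
\end{displaymath}
for a suitable $\theta\in(0,1]$, and then apply Hölder's inequality to the remaining integral to split it into moments of $S_1$, $S_2$, and $S_3$ individually. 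The minor-arc supremum of $|S_3(\alpha)|$ will be controlled by a Weyl-type / Davenport-type estimate for cubic exponential sums in $n_3$ variables: since $C_3$ is "good" and non-degenerate, one has a bound of the shape $\sup_{\mathfrak{m}}|S_3(\alpha)|\ll P^{n_3-\eta}$ for some explicit $\eta>0$ depending on $n_3$ and on $\Delta$ (this is exactly the kind of estimate quoted from Davenport and from Heath-Brown's work, to be recorded among the analytic lemmas of \S4). The moments $\int_0^1|S_i(\alpha)|^{k}\,d\alpha$ count solutions to the associated cubic Diophantine systems and are bounded by standard circle-method technology: a trivial bound $|S_i(\alpha)|\ll P^{n_i}$, an $L^2$ bound $\int_0^1|S_i(\alpha)|^2\,d\alpha\ll P^{2n_i-3+\varepsilon}$ counting pairs $\mathbf{x},\mathbf{x}'$ with $C_i(\mathbf{x})=C_i(\mathbf{x}')$, and higher even moments via the usual interplay.

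**For the case $(1,2,8)$** the arithmetic is comparatively comfortable: $n_3=8$ is large enough that the Weyl saving on $S_8(\alpha)$ over the minor arcs is substantial, and one only needs modest mean-value input from the $1$-form $S_1$ and the binary form $S_2$. Concretely I would bound $\int_0^1 |S_1(\alpha)|^{a}\,d\alpha$ and $\int_0^1|S_2(\alpha)|^{b}\,d\alpha$ for small $a,b$ using the counting interpretation (for the $1$-form these are essentially sums over $y^3-y'^3+\cdots$ type equations; for the binary form one uses that $C_2$, being good and non-degenerate in two variables, has well-controlled second and fourth moments), reserve the rest of the Hölder weight for $S_8$, and check that the exponents balance to give a total of $P^{n_0-3-\delta}=P^{8-\delta}$ for some $\delta>0$. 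The bookkeeping here is exactly the "studious calculation" alluded to in the introduction, and with $n_3=8$ there is room to spare.

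**The delicate case is $(1,1,9)$**, and here the introduction already flags the key new ingredient: one cannot afford to be wasteful with the two $1$-forms, because a single cubic exponential sum in one variable only has mean values of limited strength. The plan is to use the trivial/near-trivial bounds $|S_1(\alpha)|,|S_2(\alpha)|\ll P$ together with a nontrivial \emph{fourth moment} estimate for a one-variable cubic exponential sum — this is Br\"udern's result, to be invoked as one of the analytic lemmas — of the form $\int_0^1|S_1(\alpha)|^4\,d\alpha\ll P^{2+\varepsilon}$ (rather than the trivial $P^{4}\cdot P^{-?}$ or the easy $L^2$-interpolated bound), which is genuinely stronger than what one would get by interpolating between $L^2$ and $L^\infty$. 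Combining the square of this (to absorb both $S_1$ and $S_2$) with the minor-arc supremum bound for $S_9(\alpha)$ and an appropriate power of $\int_0^1|S_9(\alpha)|^t\,d\alpha$, I would verify that the exponents close with a positive power of $P$ to spare. The main obstacle, and the place where the argument is tightest, is precisely this exponent arithmetic in the $(1,1,9)$ case: the Weyl saving $\eta$ available for $S_9$ on $\mathfrak{m}$, the strength of Br\"udern's fourth-moment bound, and the available higher moments of $S_9$ must all be large enough simultaneously, and one genuinely needs the extra power coming from Br\"udern's theorem rather than a soft estimate. If at some stage the circle-method bounds do not close — which can happen when $C_3$ (or some subform) fails to be sufficiently generic — one instead falls back on the geometric input: either $C_3=0$, or $C_2+C_3=0$, or a suitable subsystem has a rational point by Theorem~\ref{TH2} and the corollaries of \S2, which is exactly why the proposition's conclusion is stated as the dichotomy "either $X_i(\mathbb{Q})\neq\emptyset$ for some $i$, or the minor-arc integral is $o(P^{n_0-3})$."
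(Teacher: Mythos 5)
Your overall philosophy (Hölder with good moments for $S_1,S_2$ and pointwise/mean-value control of $S_3$) is the right starting point, but in the form you state it the argument would only reproduce the ``weaker version with $n\geq 12$'' that the introduction says one gets from ``studious calculation.'' Two concrete gaps prevent it from closing at $n_0=11$.

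First, you have misidentified Br\"udern's input. The bound you write down, $\int_0^1|S_1(\alpha)|^4\,d\alpha\ll P^{2+\varepsilon}$, is just Hua's lemma (Lemma~\ref{LE5}(\romannumeral1) with $j=2$), and it is exactly what interpolation between Weyl differencings gives; it is not ``genuinely stronger.'' Br\"udern's theorem (Lemma~\ref{LE9}) is a \emph{restricted} fourth moment over Farey arcs $\mathfrak{N}(R,\phi)$, of the shape $\int_{\mathfrak{N}(R,\phi)}|T(\alpha)|^4\,d\alpha\ll P^{\varepsilon}(P^4\phi+R^{7/2}\phi+R^2\phi P^2)$ for $\phi\leq P^{-3}$, and similarly for $\phi>P^{-3}$. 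Its whole point is that on the sub-arcs that actually survive the first Hölder step, with $R$ and $\phi$ both constrained, the restricted fourth moment is far smaller than $P^{2}$ -- this is what the paper exploits on the nested pieces $\mathfrak{a}$ and $\mathfrak{b}$ of $\mathfrak{m}$, where it yields $I_4(T_1;0,\mathfrak{a})\ll P^{539/1240+O(\delta)}$ and $I_4(T_1;0,\mathfrak{b})\ll P^{1/4+O(\delta)}$, rather than $P^{1/2}$. Without this refinement the exponents in the $(1,1,9)$ case do not close; replacing Lemma~\ref{LE9} by Hua's inequality everywhere falls short.

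Second, a bound of the form $\sup_{\mathfrak{m}}|S_3|^{\theta}\cdot\int_0^1|S_1S_2||S_3|^{1-\theta}$ is essentially Davenport's $L^{\infty}$ strategy, which saves too little. The paper instead deploys Heath-Brown's van der Corput $L^2$ bound for $S_3$ (Lemmas~\ref{LE4}, \ref{LE7} and the technical Lemma~\ref{LE8}), applied on a hierarchy of sub-arcs $\mathfrak{m}_0\subset\mathfrak{m}$ (or $\mathfrak{b}\subset\mathfrak{a}\subset\mathfrak{m}$ for $(1,1,9)$); on each tier a sharper moment becomes available for the small factors. You do not invoke the $L^2$ machinery at all, and you also miss two further ingredients essential in the $(1,2,8)$ case: the major-arc approximation $S_1(\alpha)=S_1^{\ast}(\alpha)+O(P^{21/40+O(\delta)})$ of Lemma~\ref{LE6} (used to substitute $S_1^{\ast}$, which has much better $L^4$ norm, on $\mathfrak{m}_0$), and the trick of treating $C_2+C_3$ as a single good $10$-variable form $\widetilde S$ to which Lemma~\ref{LE7} is applied for the error term $I_1(S_2S_3;21/40,\mathcal{A})$. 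Without these the $(1,2,8)$ case does not close either. Finally, the dichotomy in the statement comes from the standing hypotheses of \S3 (that $C_0$, $C_i$, and $C_i+C_j$ are all good, non-degenerate, and have no nontrivial integer zeros), which is assumed before the circle-method estimates are run rather than invoked as a fallback ``if the circle-method bounds do not close'' as you suggest.
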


In the latter case, we also have $X_0(\mathbb{Q})\neq \emptyset$ in view of Lemma \ref{LE2}. Hence Theorem \ref{TH1} follows from Proposition \ref{PR2} immediately.

Note that the integral over the minor arcs is a kind of $L^1$ norm. The $L^\infty$ bound on the exponential sums is the essential content of Davenport's result on cubic forms in 16 variables. Later Heath-Brown took use of Van der Corput's method, with two additional techniques, resulted in a powerful $L^2$ bound and he successfully worked out the solubility of cubic forms in 14 variables with no restrictions. Combining these two kind of bounds, Browning showed an $L^v$ bound with $v\leq 2$. For $n=1,2$, there are good $L^v$ bounds with $v\geq 2$ (Hua's inequality in dimension one and its analog in dimension two by Wooley). And we mention that for $n=3,4,5$, $L^2$ bounds resulted from the number of solutions of cubic forms are also available. We implant these analytic results in \S4.

To apply such tools, we need to select the appropriate combination of the powers in H\"{o}lder's inequality. For simplicity, we use the following notations:
\begin{displaymath}
I_u(S;t,\mathfrak{a}):=P^{t+\varepsilon}\left(\int_{\mathfrak{a}}|S(\alpha)|^u d\alpha\right)^{1/u},
\end{displaymath}
\begin{displaymath}
I_{u,v}(S_{n_1},S_{n_2};t,\mathfrak{a}):=P^{t+\varepsilon}\left(\int_{\mathfrak{a}}|S_{n_1}(\alpha)|^u d\alpha\right)^{1/u}\left(\int_{\mathfrak{a}}|S_{n_2}(\alpha)|^v d\alpha\right)^{1/v},
\end{displaymath}
\begin{displaymath}
I_{u,v,w}(S_{n_1},S_{n_2},S_{n_3};t,\mathfrak{a}):=P^{t+\varepsilon}\left(\int_{\mathfrak{a}}|S_{n_1}(\alpha)|^u d\alpha\right)^{1/u}\left(\int_{\mathfrak{a}}|S_{n_2}(\alpha)|^v d\alpha\right)^{1/v}\left(\int_{\mathfrak{a}}|S_{n_3}(\alpha)|^w d\alpha\right)^{1/w}.
\end{displaymath}

With these notations, Proposition \ref{PR2} can be implied from
\begin{displaymath}
I_{1}(S_1S_2S_3;0,\mathfrak{m})=\textit{o}\left(P^8\right).
\end{displaymath}

%=====================   section 4   =========================
\section{Estimates on the cubic exponential sums}

\begin{lemma}[(\cite{Dav63})] \label{LE3}
Let $\varepsilon>0$. Assume that $C\in \mathbb{Z}[x_1,\ldots,x_n]$ is a good cubic form. Let $\alpha \in [0,1]$ have the representation
\begin{displaymath}
\alpha=a/q+\beta,\quad (a,q)=1,\quad 0\leq a<q\leq P^{3/2}
\end{displaymath}
with $a,q\in \mathbb{Z}$. Then
\begin{displaymath}
S(\alpha)\ll P^{n+\varepsilon}\left\{q|\beta|+\left(q|\beta|P^3\right)^{-1}\right\}^{n/8}.
\end{displaymath}
If furthermore $|\beta|\leq q^{-1}P^{-3/2}$, then
\begin{displaymath}
S(\alpha)\ll P^{n+\varepsilon}q^{-n/8}\min\left\{1,\left(|\beta|P^3\right)^{-n/8}\right\}.
\end{displaymath}
\end{lemma}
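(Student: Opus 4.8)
The plan is to establish the bound by Davenport's method of double Weyl differencing, the ``good'' hypothesis entering precisely where one must control the integer points at which the Hessian of $C$ degenerates. First I would pass from $S(\alpha)$ to $|S(\alpha)|^{4}$ and difference twice. Squaring and differencing once in a vector $\mathbf{h}$ with $|\mathbf{h}|\ll P$ replaces the cubic $C$ by a quadratic polynomial in the remaining variable; squaring and differencing once more, in a vector $\mathbf{h}'$ with $|\mathbf{h}'|\ll P$, linearises the phase, so that the innermost sum over the original variable is a truncated geometric series. This yields
\begin{displaymath}
|S(\alpha)|^{4}\;\ll\;P^{3n}\;+\;P^{2n+\varepsilon}\sum_{\substack{|\mathbf{h}|\ll P\\ |\mathbf{h}'|\ll P}}\ \prod_{j=1}^{n}\min\Bigl\{P,\ \bigl\|\alpha\,\bigl(H(\mathbf{h})\mathbf{h}'\bigr)_{j}\bigr\|^{-1}\Bigr\},
\end{displaymath}
where $H(\mathbf{h})$ is the Hessian matrix of $C$ evaluated at $\mathbf{h}$ (its entries being linear forms in $\mathbf{h}$), so that the argument of the $j$-th fractional part is a fixed bilinear form in $(\mathbf{h},\mathbf{h}')$.

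Call the double sum $\mathcal{M}(\alpha)$. The key step is to stratify the outer sum according to $r=\operatorname{rank}H(\mathbf{h})$. For fixed $\mathbf{h}$ of rank $r$, the $n$ conditions ``$\|\alpha(H(\mathbf{h})\mathbf{h}')_{j}\|$ small'' amount to essentially $r$ independent linear constraints on $\mathbf{h}'$, and a standard geometry-of-numbers count, performed with the rational approximation $\alpha=a/q+\beta$, bounds the contribution of the inner sum over $\mathbf{h}'$ by $\ll P^{n-r+\varepsilon}\bigl(1+P\bigl(q^{-1}+|\beta|P^{2}\bigr)\bigr)^{r}$, which simplifies in the range $q\le P^{3/2}$ to $\ll P^{n+\varepsilon}\bigl(q|\beta|+(q|\beta|P^{3})^{-1}+q^{-1}\bigr)^{r/2}$. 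Summing over $\mathbf{h}$ and invoking the ``good'' bound
\begin{displaymath}
\#\{\mathbf{h}\in\mathbb{Z}^{n}:|\mathbf{h}|\le P,\ \operatorname{rank}H(\mathbf{h})=r\}\ll P^{r+\varepsilon}
\end{displaymath}
shows that the $r$-stratum contributes $\ll P^{2n+\varepsilon}\bigl(q|\beta|+(q|\beta|P^{3})^{-1}+q^{-1}\bigr)^{r/2}$. Since $q|\beta|+(q|\beta|P^{3})^{-1}\gg P^{-3/2}$ by the arithmetic–geometric mean inequality, the strata with $r<n$ are dominated by $r=n$ (the spurious $q^{-1}$ only ever yields the trivial bound $S\ll P^{n+\varepsilon}$, which is harmless), so $\mathcal{M}(\alpha)\ll P^{2n+\varepsilon}\bigl(q|\beta|+(q|\beta|P^{3})^{-1}\bigr)^{n/2}$; the same lower bound on the bracket also absorbs $P^{3n}$. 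Taking fourth roots gives the first displayed estimate.

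For the refined bound, suppose $|\beta|\le q^{-1}P^{-3/2}$. If $|\beta|P^{3}\ge1$, then $q|\beta|\cdot(q|\beta|P^{3})^{-1}=P^{-3}\le1$ shows $(q|\beta|P^{3})^{-1}$ is the larger of the two terms, so the first estimate already gives $S(\alpha)\ll P^{n+\varepsilon}q^{-n/8}(|\beta|P^{3})^{-n/8}$. If instead $|\beta|P^{3}<1$, then in each fractional part the contribution of $\beta$ is $\ll|\beta|P^{2}<P^{-1}$ and hence negligible, so $\mathcal{M}(\alpha)$ is controlled by a congruence count modulo $q$; combining the count of admissible $\mathbf{h}'$ modulo $q$ with the ``good'' rank bound as above yields $\mathcal{M}(\alpha)\ll P^{2n+\varepsilon}q^{-n/2}$, i.e.\ $S(\alpha)\ll P^{n+\varepsilon}q^{-n/8}$. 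Together the two cases give $S(\alpha)\ll P^{n+\varepsilon}q^{-n/8}\min\{1,(|\beta|P^{3})^{-n/8}\}$.

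The main obstacle I anticipate is the bookkeeping in the stratified count: one must check that the geometry-of-numbers estimate for the number of admissible difference vectors $\mathbf{h}'$, expressed in terms of $q$ and $\beta$, combines with the ``good'' bound $\ll P^{r+\varepsilon}$ for each Hessian rank stratum so that the worst stratum contributes exactly the exponent $n/2$ inside $|S(\alpha)|^{4}$ (equivalently $n/8$ for $S(\alpha)$) and nothing weaker. This is exactly the place where Davenport's original treatment of arbitrary cubic forms — which could only use the trivial stratification — is improved by the good hypothesis; the remaining computations are routine and are carried out, for instance, in \cite[\S\S11--13]{Dav} and \cite[\S3]{HB07}.
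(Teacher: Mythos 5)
The paper does not give a proof of this lemma; it is quoted verbatim from Davenport's \emph{Cubic forms in sixteen variables} (see also Davenport's book, edited by Browning, and Heath-Brown's account). So the attempt must be judged as a reconstruction of Davenport's argument. Your broad outline is the right one: double Weyl differencing produces a product of $\min\{P,\|\alpha(H(\mathbf{h})\mathbf{h}')_j\|^{-1}\}$ and the ``good'' hypothesis controls the strata where the Hessian drops rank.

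There is, however, a genuine gap at the decisive step. After the rank stratification you arrive at a bound for the inner $\mathbf{h}'$-count of the shape $P^{n-r+\varepsilon}\bigl(1+P(q^{-1}+|\beta|P^2)\bigr)^r$, and you then assert that this ``simplifies in the range $q\le P^{3/2}$'' to $P^{n+\varepsilon}\bigl(q|\beta|+(q|\beta|P^3)^{-1}+q^{-1}\bigr)^{r/2}$, finally dismissing the $q^{-1}$ as harmless. Neither move survives scrutiny. The asserted algebraic rewriting is not an identity and is not an inequality valid on the whole range; more importantly, the extra $q^{-1}$ cannot be discarded, since for (say) $q$ bounded and $|\beta|\asymp P^{-2}$ we have $q^{-1}\asymp 1$ while $q|\beta|+(q|\beta|P^3)^{-1}\asymp P^{-1}$, so your version of the bound degenerates to the trivial estimate $S\ll P^{n+\varepsilon}$ exactly where the lemma claims a saving of order $P^{n/8}$. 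The ingredient you have omitted is Davenport's \emph{shrinking lemma}: one difference over boxes of the smaller side $P^\theta$ (with the fractional-part threshold scaled to $P^{-3+2\theta}$), bounds the count for each rank stratum in terms of $\theta$, and then optimises over $\theta\in(0,1]$. It is precisely this optimisation that produces the two competing terms $q|\beta|$ and $(q|\beta|P^3)^{-1}$ and eliminates the spurious $q^{-1}$; a single fixed-box count of the kind you describe cannot reach the first display. (A secondary, purely cosmetic slip: the factor in front of the differenced sum should be $P^n$, not $P^{2n}$.) Once the shrinking step is in place, your remaining remarks, including the AM--GM observation $q|\beta|+(q|\beta|P^3)^{-1}\gg P^{-3/2}$ and the two-case deduction of the refined bound under $|\beta|\le q^{-1}P^{-3/2}$, are sound.
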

\begin{lemma}[({\cite[Lemma 7]{Browning1}})] \label{LE4}
Let $\varepsilon>0$. Assume that $C\in \mathbb{Z}[x_1,\ldots,x_n]$ is a good cubic form. Let $1\leq R\leq P^{3/2}$ and $0<\phi\leq R^{-2}$. Define
\begin{displaymath}
\mathcal{M}_v(R,\phi,\pm):=\sum\limits_{R\leq q< 2R} \sum\limits_{(a,q)=1} \int_\phi^{2\phi}\left|S\left(\frac{a}{q}\pm \beta\right)\right|^v d\beta.
\end{displaymath}
Then
\begin{displaymath}
\mathcal{M}_v(R\,\phi,\pm)\ll P^3+R^2\phi^{1-v/2}\left(\frac{\psi_H P^{2n-1+\varepsilon}}{H^{n-1}}F\right)^{v/2},
\end{displaymath}
with $H$ any integer in $[1,P]$ and
\begin{displaymath}
\psi_H:=\phi+\frac{1}{P^2 H},\quad F:=1+(RH^3\psi_H)^{n/2}+\frac{H^n}{R^{n/2}(P^2\psi_H)^{(n-2)/2}}.
\end{displaymath}
\end{lemma}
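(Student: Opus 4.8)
The plan is to establish Lemma~\ref{LE4} by a Weyl-differencing argument adapted to the short sum $S(\alpha)$, following the treatment in \cite{Browning1} (and ultimately the Van der Corput method of \cite{HB07}). First I would fix the van der Corput shift parameter $H\in[1,P]$ and carry out one differencing step, writing $|S(\alpha)|^2$ as a sum over a difference variable $\mathbf h$ with $|\mathbf h|\ll P$ of inner sums that are themselves exponential sums in the remaining variable, of linear-plus-quadratic type in the relevant entries because the cubic $C$ has been differenced once. The key point from the "good" hypothesis on $C$ is that the resulting bilinear forms in $\mathbf h$ and the live variable have controllable rank behaviour, so that the count of $\mathbf h$ for which the inner sum is large is governed by the Hessian-rank estimate built into the definition of a good form.

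Next I would average $|S(a/q\pm\beta)|^v$, using $v/2$ copies of the differenced $L^2$-type bound together with the trivial bound $|S|\le (2\rho P)^n$ on the remaining $v-2$ copies; this is what produces the factor $\phi^{1-v/2}$ from the $\beta$-integral of length $\asymp\phi$ and the power $(\cdots)^{v/2}$ in the statement. The innermost estimate, after opening the square and summing the geometric series in the live variable, reduces to counting lattice points $\mathbf h$ in a box of side $\asymp P$ subject to a congruence condition modulo $q$ coming from the denominator of $\alpha$ and a smallness condition $\|\,\text{(linear form in }\mathbf h)\,\|$ of size controlled by $\psi_H=\phi+1/(P^2H)$. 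Summing this count over the dyadic range $R\le q<2R$ of moduli, and over residues $a$ coprime to $q$, gives the three terms of $F$: the constant $1$ from the "generic" contribution, the term $(RH^3\psi_H)^{n/2}$ from large common structure, and the term $H^n R^{-n/2}(P^2\psi_H)^{-(n-2)/2}$ from the small-modulus regime; the stray $P^3$ summand absorbs the diagonal $\mathbf h=\mathbf0$ contribution, where the inner sum is trivially of size $P^n$ and the $q$- and $\beta$-sums contribute $R^2\phi$, all bounded by $P^3$ since $R\le P^{3/2}$ and $\phi\le R^{-2}$.

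I expect the main obstacle to be the bookkeeping in the lattice-point count over $\mathbf h$: one must pass from the raw count of $\mathbf h$ with a large inner exponential sum to the clean bound involving $F$, which requires carefully balancing the congruence condition modulo $q$ (and the coprimality of $a$) against the real smallness condition of scale $\psi_H$, and then summing over the dyadic block of $q$ without losing more than $\varepsilon$ in the exponent. Keeping the dependence on $H$, $R$, $\phi$ explicit and uniform throughout — rather than optimising $H$ prematurely — is what makes the statement flexible enough to be used later, and it is also where the "good form" hypothesis must be invoked precisely, namely to guarantee that the set of $\mathbf h$ for which the differenced bilinear form drops rank is small enough ($\ll H^{r+\varepsilon}$ at rank $r$) to be summed against the trivial size $P^{n-r}$ of the corresponding inner sums. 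Once these pieces are assembled, the stated bound follows by combining the diagonal term, the off-diagonal count, and the trivial bounds through H\"older's inequality in the standard way.
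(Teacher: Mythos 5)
The paper does not prove Lemma~\ref{LE4}; it is stated as a citation, \cite[Lemma~7]{Browning1}, and quoted verbatim, so there is no in-paper proof to compare your sketch against. What you have written is a plausible high-level reconstruction of Browning's argument (which in turn adapts Heath-Brown's Van der Corput method from \cite{HB07}): one differencing step to reduce $|S(\alpha)|^2$ to a lattice-point count over a difference vector $\mathbf h$ in a box of side $\asymp P$, use of the `good form' hypothesis via the Hessian-rank estimate $\#\{\mathbf h : \mathrm{rank}\,H(\mathbf h)=r\}\ll H^{r+\varepsilon}$, and dyadic summation over moduli $R\le q<2R$. Those are the right ingredients, and the identification of the diagonal contribution with the $P^3$ term and the three pieces of $F$ with the three regimes of the count is reasonable.

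One part of the bookkeeping as you have stated it does not match the shape of the stated bound. You describe a H\"older step using ``$v/2$ copies of the differenced $L^2$-type bound together with the trivial bound $|S|\le(2\rho P)^n$ on the remaining $v-2$ copies,'' but $v/2+(v-2)\ne v$, and, more to the point, no power of the trivial bound $P^{n(v-2)}$ appears anywhere in the final estimate. The exponent structure $R^2\phi^{1-v/2}\left(\psi_H P^{2n-1+\varepsilon}F/H^{n-1}\right)^{v/2}$ arises instead from a single \emph{pointwise} differenced bound of the shape $|S(\alpha)|^2\ll \psi_H P^{2n-1+\varepsilon}F/(H^{n-1}\phi)$, valid uniformly for $\alpha=a/q\pm\beta$ with $\beta\in[\phi,2\phi]$: raising this to the power $v/2$, integrating over $\beta$ (length $\asymp\phi$), and summing over the $\asymp R^2$ fractions $a/q$ with $R\le q<2R$, $(a,q)=1$, reproduces exactly $R^2\phi\cdot\bigl(\cdots/\phi\bigr)^{v/2}=R^2\phi^{1-v/2}(\cdots)^{v/2}$, with no interpolation against a trivial bound at all. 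If you want to present this as a proof, that is the step to fix; the differencing, the role of the `good' hypothesis, and the lattice-count decomposition are otherwise in line with \cite[Lemma~7]{Browning1}.
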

\begin{lemma} \label{LE5}
Let $\varepsilon>0$. The following bounds hold.

(\romannumeral1) (\cite[Lemma 3.2]{Dav}) For $n=1$, one has
\begin{displaymath}
\int_0^1|S(\alpha)|^{2^j}d\alpha \ll P^{2^j-j+\varepsilon},
\end{displaymath}
for any $j\leq 3$.

(\romannumeral2) (\cite[Theorem 2]{Woo99}) For $n=2$, suppose that $C\in \mathbb{Z}[x_1,x_2]$ is a non-degenerate binary cubic form, then
\begin{displaymath}
\int_0^1|S(\alpha)|^{2^{j-1}}d\alpha \ll P^{2^j-j+\varepsilon},
\end{displaymath}
for any $j\leq 3$.
\end{lemma}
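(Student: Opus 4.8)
Both parts of Lemma~\ref{LE5} are established in the literature---part~(\romannumeral1) is \cite[Lemma~3.2]{Dav}, that is, Hua's inequality for cubic Weyl sums together with its two lower-order companions, and part~(\romannumeral2) is \cite[Theorem~2]{Woo99}---so the plan is to recall the mechanism behind each and to indicate the short arguments.

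For part~(\romannumeral1), a non-zero cubic form in one variable is $C(x)=ax^{3}$ with $a\in\mathbb{Z}\setminus\{0\}$, so that $S(\alpha)=\sum_{x}e(\alpha ax^{3})$ runs over an interval of length $\asymp P$. When $j=1$ the quantity $\int_{0}^{1}|S(\alpha)|^{2}\,d\alpha$ counts pairs $(x,y)$ with $ax^{3}=ay^{3}$, hence $x=y$, so it is $\ll P\ll P^{1+\varepsilon}$. When $j=2$ it counts solutions of $x_{1}^{3}+x_{2}^{3}=y_{1}^{3}+y_{2}^{3}$; writing $m=x_{1}^{3}+x_{2}^{3}$ and using $x_{1}^{3}+x_{2}^{3}=(x_{1}+x_{2})(x_{1}^{2}-x_{1}x_{2}+x_{2}^{2})$, one finds that $x_{1}+x_{2}$ divides $m$ and then pins down $x_{1}x_{2}$, so each $m\neq0$ has $\ll P^{\varepsilon}$ representations; with the $m=0$ contribution being $\asymp P^{2}$ and $\sum_{m}r(m)=P^{2}$, the total $\sum_{m}r(m)^{2}$ is $\ll P^{2+\varepsilon}$. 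When $j=3$ the bound $\int_{0}^{1}|S(\alpha)|^{8}\,d\alpha\ll P^{5+\varepsilon}$ is Hua's inequality: one differences $|S(\alpha)|^{2}$ into a sum over $h$ of a quadratic exponential sum, applies H\"older, differences once more into a linear sum, and closes an induction on $j$ with the trivial $L^{2}$-bound, the bookkeeping delivering exactly the exponent $2^{j}-j$. (Lemma~\ref{LE3} supplies the companion pointwise estimate but is not needed here.)

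For part~(\romannumeral2), write $S(\alpha)=\sum_{\mathbf{x}}e(\alpha C(\mathbf{x}))$ over a box of side $\asymp P$ in $\mathbb{Z}^{2}$ and put $r(c)=\#\{\mathbf{x}:C(\mathbf{x})=c\}$, so $\sum_{c}r(c)\ll P^{2}$. The cases $j=1,2$ follow once one knows $\max_{c\neq0}r(c)\ll P^{\varepsilon}$, a uniform Thue-type count: being non-degenerate, $C$ is, up to a $\mathrm{GL}_{2}(\mathbb{Q})$-change of variables, either of non-zero discriminant---so that $C(x,y)=c$ with $c\neq0$ is a Thue equation with $\ll_{\varepsilon}|c|^{\varepsilon}\ll P^{\varepsilon}$ integer solutions by the classical uniform bounds---or equivalent to $u^{2}v$, in which case $C=c$ forces $u^{2}\mid c$ and so leaves $\ll P^{\varepsilon}$ points; the locus $c=0$ contributes $r(0)\ll P$, and $\asymp P$ only when $C$ has a rational linear factor. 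Hence $\int_{0}^{1}|S(\alpha)|^{2}\,d\alpha=\sum_{c}r(c)^{2}\ll P^{\varepsilon}\sum_{c}r(c)+r(0)^{2}\ll P^{2+\varepsilon}$, which is the $j=2$ bound, and the $j=1$ bound follows by Cauchy--Schwarz from $\int_{0}^{1}|S(\alpha)|\,d\alpha\le\big(\int_{0}^{1}|S(\alpha)|^{2}\,d\alpha\big)^{1/2}$.

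The case $j=3$, namely $\int_{0}^{1}|S(\alpha)|^{4}\,d\alpha\ll P^{5+\varepsilon}$---equivalently $\sum_{m}r_{2}(m)^{2}\ll P^{5+\varepsilon}$ with $r_{2}(m)=\sum_{c}r(c)r(m-c)$---is where the real work lies, and here I would simply cite \cite[Theorem~2]{Woo99}. The naive route, deducing it from $\max_{m}r_{2}(m)\ll P^{1+\varepsilon}$ by Cauchy--Schwarz, breaks down exactly when $C$ has a rational linear factor, since then the single term $r(0)r(m)$ in $r_{2}(m)$ can already be $\asymp P^{2}$; Wooley's argument handles this by stratifying the eight-variable solution set according to which of the four points involved lie on $C=0$ and bounding each stratum separately via $r(c)\ll P^{\varepsilon}$, divisor estimates, and two-dimensional differencing. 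I therefore expect this stratification to be the only genuinely delicate point, everything else being elementary.
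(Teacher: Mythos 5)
Your proposal is correct: the paper itself offers no proof of this lemma, simply quoting part~(\romannumeral1) from \cite[Lemma~3.2]{Dav} (Hua's inequality and its low moments) and part~(\romannumeral2) from \cite[Theorem~2]{Woo99}, which is exactly what you do. Your sketches of the elementary cases ($j=1,2$ in each part) and your identification of the rational-linear-factor issue as the sole obstacle to a naive fourth-moment bound in the binary case accurately reflect the content of the cited results.
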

For $A,B,C\geq 0$, define $\mathcal{A}=\mathcal{A}(A,B,C)$ to be the set of $\alpha\in [0,1]$ for which there exists $a,q\in \mathbb{Z}$ such that
\begin{displaymath}
\alpha=a/q+\beta,\quad (a,q)=1,\quad 1\leq a<q\leq P^A,\quad |\beta|\leq q^{-B}P^{-3+C}.
\end{displaymath}

For $n=1$, define
\begin{displaymath}
S^\ast (\alpha):= q^{-1} P\, S_{a,q} I(\beta P^3).
\end{displaymath}
We try to approximate $S(\alpha)$ by $S^\ast (\alpha)$. Since $S^\ast (\alpha)$ has better $L^v$ $(v\geq 2)$ bounds, we can gain extra saving if their difference is small in some particular intervals. The following lemma can be derived from the book of Vaughan \cite[\S4]{Vau97} (or see \cite[Lemma 10]{Browning1}).
\begin{lemma} \label{LE6}
Let $\varepsilon>0$ and $n=1$. Suppose $A,B,C\geq 0$ and $A,B\leq 1$. Then for any $\alpha\in \mathcal{A}$,
\begin{displaymath}
S(\alpha)=S^\ast(\alpha)+\textit{O}\left(P^{A/2+\varepsilon}+P^{(A+C-AB)/2+\varepsilon}\right).
\end{displaymath}
Furthermore, if $k\geq 4$, then
\begin{displaymath}
\int_\mathcal{A}|S^\ast(\alpha)|^k d\alpha \ll P^{k-3+\varepsilon}.
\end{displaymath}
\end{lemma}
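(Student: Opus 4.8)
The plan is to derive both statements from the classical analysis of one-dimensional cubic Weyl sums in \cite[\S4]{Vau97}. As $n=1$ we have $C(x)=cx^{3}$ with $c\in\mathbb{Z}\setminus\{0\}$; write $v(\beta):=\int_{|P^{-1}x-z|<\rho}e(\beta C(x))\,dx$ for the complete exponential integral attached to $C$, so that, unwinding the normalisation of $I$, one has $S^{\ast}(\alpha)=q^{-1}S_{a,q}v(\beta)$, and recall the van der Corput bound $v(\beta)\ll P(1+P^{3}|\beta|)^{-1/3}$.

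For the approximation identity I would invoke the standard major-arc approximation for cubic Weyl sums (\cite[\S4]{Vau97}, or \cite[Lemma 10]{Browning1}): for $\alpha=a/q+\beta$ with $(a,q)=1$ and $1\le q\le P$,
\[
S(\alpha)=q^{-1}S_{a,q}v(\beta)+O\bigl(q^{1/2+\varepsilon}(1+P^{3}|\beta|)^{1/2}\bigr),
\]
the passage from $\sum_{x\le P}e(\alpha x^{3})$ to our shifted, dilated sum being a routine partial summation inside residue classes modulo $q$. It then remains to insert the defining constraints of $\mathcal{A}(A,B,C)$, namely $q\le P^{A}$ and $|\beta|\le q^{-B}P^{-3+C}$. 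The first error term is $\ll q^{1/2+\varepsilon}\ll P^{A/2+\varepsilon}$, and in the second, $q^{1/2+\varepsilon}(P^{3}|\beta|)^{1/2}\le q^{(1-B)/2+\varepsilon}P^{C/2}$; here the hypothesis $B\le 1$ forces $(1-B)/2\ge0$, so $q^{(1-B)/2}\le P^{A(1-B)/2}$ and this term is $\ll P^{(A+C-AB)/2+\varepsilon}$. This gives the first assertion.

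For the integral bound I would cover $\mathcal{A}$ by the intervals $a/q+[-q^{-B}P^{-3+C},\,q^{-B}P^{-3+C}]$ over $1\le a<q\le P^{A}$, $(a,q)=1$; since only an upper bound is needed we may sum over all such pairs, ignoring overlaps and non-uniqueness of the representation, whence
\[
\int_{\mathcal{A}}|S^{\ast}(\alpha)|^{k}\,d\alpha\le\sum_{q\le P^{A}}\sum_{(a,q)=1}q^{-k}|S_{a,q}|^{k}\int_{\mathbb{R}}|v(\beta)|^{k}\,d\beta .
\]
Two classical inputs now finish the proof. First, $v(\beta)\ll P(1+P^{3}|\beta|)^{-1/3}$ yields, after $u=P^{3}\beta$, that $\int_{\mathbb{R}}|v(\beta)|^{k}\,d\beta\ll P^{k-3}\int_{\mathbb{R}}(1+|u|)^{-k/3}\,du\ll P^{k-3}$, the last integral converging precisely because $k\ge4>3$. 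Second, the partial-singular-series estimate $\sum_{q\le X}q^{-k}\sum_{(a,q)=1}|S_{a,q}|^{k}\ll X^{\varepsilon}$ for $k\ge4$ (standard, \cite[\S4]{Vau97}): the summand is multiplicative in $q$, its value at a prime $p$ being $\ll p^{-1}$ when $k=4$ --- via the congruence count $\#\{x_{1}^{3}+x_{2}^{3}\equiv x_{3}^{3}+x_{4}^{3}\,(\text{mod }p)\}=p^{3}+O(p^{2})$ after removing the $a\equiv0$ contribution --- and $\ll p^{1-k/2}$ when $k\ge5$, so the Euler product is $\ll\log X$ for $k=4$ and $\ll1$ for $k\ge5$. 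Multiplying the two bounds gives $\int_{\mathcal{A}}|S^{\ast}|^{k}\,d\alpha\ll P^{k-3+\varepsilon}$.

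The only genuinely delicate point is the borderline case $k=4$ of the partial-singular-series estimate: there the relevant Dirichlet series is not absolutely convergent, and one must exploit the square-root cancellation in the fourth moment of cubic Gauss sums modulo $p$ (equivalently the congruence count above) rather than the pointwise bound $|S_{a,q}|\ll q^{2/3+\varepsilon}$, which would only deliver the weaker $P^{k-3+2A/3+\varepsilon}$. Everything else is bookkeeping: verifying that Vaughan's Weyl-sum approximation survives the change of shape and range, and that the exponents in the definition of $\mathcal{A}$ propagate correctly under the hypotheses $A,B\le1$.
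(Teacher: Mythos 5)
Your argument is essentially the standard one that the paper itself delegates to Vaughan \cite[\S4]{Vau97} and \cite[Lemma 10]{Browning1}: the major-arc approximation $S(\alpha)=q^{-1}S_{a,q}v(\beta)+O\bigl(q^{1/2+\varepsilon}(1+P^{3}|\beta|)^{1/2}\bigr)$ for $q\le P$, followed by the two convergence facts $\int_{\mathbb{R}}|v(\beta)|^{k}\,d\beta\ll P^{k-3}$ (needing $k>3$) and $\sum_{q\le X}q^{-k}\sum_{(a,q)=1}|S_{a,q}|^{k}\ll X^{\varepsilon}$ for $k\ge 4$. The bookkeeping with $q\le P^{A}$, $|\beta|\le q^{-B}P^{-3+C}$, and $A,B\le 1$ is carried out correctly and matches the stated exponents, and your observation that overlaps in the covering of $\mathcal{A}$ only help an upper bound is right.

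One point worth tightening: for $k=4$ you only discuss the local factor at $q=p$, where the fourth-moment congruence count gives $A(p)\ll p^{-1}$; but the Euler factor also includes prime powers, and here $A(p^{3})\asymp p^{-1}$ (since $|S_{a,p^{3}}|=p^{2}$ for $p>3$, $(a,p)=1$), so it is not negligible. Fortunately $A(p^{2})\ll p^{-3}$ and $A(p^{j})\ll p^{-j/3+\varepsilon}$ for $j\ge 4$, so $\sum_{j\ge 1}A(p^{j})\ll p^{-1}$ still holds and the product over $p\le X^{A}$ is $\ll(\log X)^{O(1)}\ll X^{\varepsilon}$; your conclusion is therefore correct, though the Euler product is a fixed power of $\log X$ rather than literally $\ll\log X$. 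With this caveat noted, the proposal is a correct fleshing-out of the citation the paper relies on.
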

\begin{lemma} \label{LE7}
Assume $\mathcal{A}$ is defined as above with $A,B,C\geq 0$. We have
\begin{displaymath}
I_v(S;t,\mathcal{A})\ll
\begin{cases}
P^{n+t-(3-C)\left(\frac{1}{v}+\frac{n}{8}\right)+A\left(\frac{2}{v}+\frac{n}{8}-B\left(\frac{1}{v}+\frac{n}{8}\right)\right)+\varepsilon}+
 P^{\frac{5}{8} n+t+\varepsilon+\min\left\{\frac{nA}{8} ,\;\; -(3-C)\left(\frac{1}{v}-\frac{n}{8}\right)+A\left(\frac{2}{v}-\frac{n}{8}-B\left(\frac{1}{v}-\frac{n}{8}\right)\right) \right\}},\quad \text{if }nv\leq 8.\\
P^{n+t-(3-C)\left(\frac{1}{v}+\frac{n}{8}\right)+A\left(\frac{2}{v}+\frac{n}{8}-B\left(\frac{1}{v}+\frac{n}{8}\right)\right)+\varepsilon}+
 P^{n+t-\frac{3}{v}+A\left(\frac{2}{v}-\frac{n}{8}\right)+\varepsilon},\quad \text{if } 8\leq nv\leq 16.\\
P^{n+t-(3-C)\left(\frac{1}{v}+\frac{n}{8}\right)+A\left(\frac{2}{v}+\frac{n}{8}-B\left(\frac{1}{v}+\frac{n}{8}\right)\right)+\varepsilon}+
 P^{n+t-\frac{3}{v}+\varepsilon},\quad \text{if }nv\geq 16.
\end{cases}
\end{displaymath}
\end{lemma}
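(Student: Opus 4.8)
The plan is to reduce the whole estimate to the pointwise bound of Lemma~\ref{LE3} by dyadically dissecting $\mathcal{A}$ according to the size of the denominator $q$ and the size of $\beta$. First I would fix a dyadic parameter $R$ with $1\le R\le P^{A}$ and set $\mathcal{A}_{R}:=\{\alpha\in\mathcal{A}:R\le q<2R\}$, so that $\int_{\mathcal{A}}|S(\alpha)|^{v}\,d\alpha=\sum_{R}\int_{\mathcal{A}_{R}}|S(\alpha)|^{v}\,d\alpha$ is a sum of $O(\log P)$ terms; replacing, if necessary, the approximation $a/q$ by a Dirichlet approximation of denominator at most $P^{3/2}$, we may assume $R\le P^{3/2}$, so that Lemma~\ref{LE3} is available throughout. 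For each $q$ with $R\le q<2R$ and each $a$ with $(a,q)=1$, the interval $|\beta|\le q^{-B}P^{-3+C}$ around $a/q$ is broken into the central portion $|\beta|\le P^{-3}$, the dyadic ranges $\phi\le|\beta|<2\phi$ with $P^{-3}\le\phi\le R^{-1}P^{-3/2}$, and (only when $q^{-B}P^{-3+C}$ exceeds $R^{-1}P^{-3/2}$, which happens solely for $R$ above an explicit threshold depending on $B,C$, and possibly never) the further dyadic ranges up to $q^{-B}P^{-3+C}$. On these three kinds of piece Lemma~\ref{LE3} gives, respectively, $|S(a/q+\beta)|\ll P^{n+\varepsilon}R^{-n/8}$, then $|S(a/q+\beta)|\ll P^{n+\varepsilon}R^{-n/8}(|\beta|P^{3})^{-n/8}$, and finally $|S(a/q+\beta)|\ll P^{n+\varepsilon}(R|\beta|)^{n/8}$.

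The next step is to raise each of these bounds to the power $v$, integrate over the relevant range of $\beta$, collect a factor $O(R)$ from the sum over the $a\bmod q$ with $(a,q)=1$ and another factor $O(R)$ from the sum over $q\in[R,2R)$, and sum the resulting geometric progressions in $\phi$. The $\beta$-integrals and the $\phi$- and $R$-sums change behaviour exactly at $nv=8$ and at $nv=16$ (where the relevant exponents pass through $-1$ or $0$), which is the source of the three cases in the statement; in every regime each of these sums is controlled by one of its endpoints, leaving for each dyadic $R$ a bound of the shape $P^{(\cdots)}R^{(\cdots)}$. Summing over the $O(\log P)$ dyadic values $R\le P^{A}$, the sum is again dominated by an endpoint, namely $R\asymp 1$, or $R\asymp P^{A}$, or the threshold at which the last (``far'') range switches on; collecting the finitely many powers of $P$ so produced, with the logarithms absorbed into $P^{\varepsilon}$, gives exactly the two displayed terms in each regime.

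The content of the argument is therefore bookkeeping rather than any single difficult inequality: in each of the three regimes and for each piece of the $\beta$-dissection one must decide which endpoint of each geometric sum dominates, and these decisions turn on the signs of several exponents affine in $A,B,C$. I expect the main obstacle to be twofold: (i)~handling the threshold in $R$ below which only the refined form of Lemma~\ref{LE3} is needed and above which its first bound takes over, so that the $R$-sums run over the correct sub-ranges; and (ii)~checking that the two stated terms majorise \emph{every} piece in \emph{every} sub-case, not merely the typical contribution. Once this dissection has been carried through, the outcome is the explicit closed form above, deliberately cast in a shape into which one can, in \S5, substitute numerical values of $(A,B,C,n,v,t)$ and verify the required inequalities by direct computation.
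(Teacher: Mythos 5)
Your plan is essentially the paper's own proof: the paper likewise performs a dyadic dissection in $R\asymp q$ and $\phi\asymp|\beta|$, bounds each piece trivially by $(R^2\phi)^{1/v}\sup|S|$, and substitutes the pointwise estimates of Lemma~\ref{LE3} in the three regimes $R\phi\ge P^{-3/2}$, $P^{-3}\le\phi\le R^{-1}P^{-3/2}$, $\phi\le P^{-3}$, with the subdivision at $nv=8$ and $nv=16$ arising exactly as you describe from the sign of the resulting exponents. The only thing missing from your write-up is the final exponent bookkeeping itself (which the paper carries out in four displayed chains of inequalities), but the route is the correct one.
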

\begin{proof}
By dyadic summation, we have
\begin{displaymath}
I_v(S;t,\mathcal{A})\ll P^{t+\varepsilon/2}(\log P)^2 \max\limits_{R,\phi,\pm} \mathcal{M}_v (R,\phi,\pm)^{1/v},
\end{displaymath}
where the maximum runs over the possible sign changes and $R,\phi$ satisfy $R\leq P^A$, $\phi\leq R^{-B}P^{-(3-C)}$.

For the case $R\phi\geq P^{-3/2}$, Lemma \ref{LE3} shows that $S(\alpha)\ll P^{n+\varepsilon}R^{n/8}\phi^{n/8}$. Then
\begin{displaymath}
I_v(S;t,\mathcal{A})\ll P^{t+\varepsilon}(R^2\phi)^{\frac{1}{v}}\cdot P^{n+\varepsilon}R^{\frac{n}{8}}\phi^{\frac{n}{8}}
  \ll P^{n+t+\varepsilon}R^{\frac{2}{v}+\frac{n}{8}}\phi^{\frac{1}{v}+\frac{n}{8}} \ll P^{n+t-(3-C)\left(\frac{1}{v}+\frac{n}{8}\right)+\varepsilon}R^{\frac{2}{v}+\frac{n}{8}-B\left(\frac{1}{v}+\frac{n}{8}\right)}.
\end{displaymath}
For $R\phi\leq P^{-3/2}$ and $nv\leq 8$, one has $S(\alpha)\ll P^{5n/8+\varepsilon}R^{-n/8}\phi^{-n/8}$. Then
\begin{displaymath}
I_v(S;t,\mathcal{A})\ll P^{t+\varepsilon}(R^2\phi)^{\frac{1}{v}}\cdot P^{\frac{5}{8} n+\varepsilon}R^{-\frac{n}{8}}\phi^{-\frac{n}{8}}
  \ll P^{\frac{5}{8} n+t+\varepsilon}R^{\frac{2}{v}-\frac{n}{8}}\phi^{\frac{1}{v}-\frac{n}{8}} \ll P^{\frac{5}{8} n+t-(3-C)\left(\frac{1}{v}-\frac{n}{8}\right)+\varepsilon}R^{\frac{2}{v}-\frac{n}{8}-B\left(\frac{1}{v}-\frac{n}{8}\right)}.
\end{displaymath}
Another estimate can be
\begin{displaymath}
I_v(S;t,\mathcal{A})\ll P^{\frac{5}{8} n+t+\varepsilon}R^{\frac{2}{v}-\frac{n}{8}}\phi^{\frac{1}{v}-\frac{n}{8}} =P^{\frac{5}{8} n+t+\varepsilon}R^{\frac{n}{8}}(R^2\phi)^{\frac{1}{v}-\frac{n}{8}}\ll P^{\frac{5}{8} n+t+\varepsilon}R^{\frac{n}{8}}\ll
P^{\frac{5}{8} n+t+\frac{nA}{8}+\varepsilon}.
\end{displaymath}
For $P^{-3}\leq \phi\leq R^{-1}P^{-3/2}$ and $nv\geq 8$, one has
\begin{equation}
I_v(S;t,\mathcal{A})\ll P^{\frac{5}{8} n+ t+\varepsilon}R^{\frac{2}{v}-\frac{n}{8}}\phi^{\frac{1}{v}-\frac{n}{8}} \ll P^{\frac{5}{8} n+t-3\left(\frac{1}{v}-\frac{n}{8}\right)+\varepsilon}R^{\frac{2}{v}-\frac{n}{8}}=P^{n+t-\frac{3}{v}+\varepsilon}R^{\frac{2}{v}-\frac{n}{8}}. \label{lem7}
\end{equation}
For $\phi\leq P^{-3}$ and $nv\geq 8$, one has $S(\alpha)\ll P^{n+\varepsilon}R^{-n/8}$. Then
\begin{equation}
I_v(S;t,\mathcal{A})\ll P^{t+\varepsilon}(R^2\phi)^{\frac{1}{v}}\cdot P^{n+\varepsilon}R^{-n/8}
  \ll P^{n+t+\varepsilon}R^{\frac{2}{v}-\frac{n}{8}}\phi^{\frac{1}{v}} \ll P^{n+t-\frac{3}{v}+\varepsilon}R^{\frac{2}{v}-\frac{n}{8}}. \label{lem7-2}
\end{equation}
Now Lemma \ref{LE7} follows.
\end{proof}
\begin{remark*}
When $nv> 16$, the exponent on $R$ is negative. And on $\mathfrak{m}$ we additionally have the fact that $R\leq P^{\Delta}$ and $\phi\leq P^{-3+\Delta}$ do not hold simultaneously. If $\phi\geq P^{-3+\Delta}$, the right side of (\ref{lem7}) can be replaced by $P^{\frac{5}{8} n+t+\frac{nA}{8}-\Delta(\frac{n}{8}-\frac{1}{v})+\varepsilon}R^{\frac{2}{v}-\frac{n}{8}}$. Otherwise we will have $R\geq P^{\Delta}$ and the right side of (\ref{lem7}), (\ref{lem7-2}) can be bounded by $P^{n+t-\frac{3}{v}-\Delta(\frac{n}{8}-\frac{2}{v})+\varepsilon}$. Hence actually we can achieve the bound
\begin{equation}
I_v(S;t,\mathcal{A}\cap \mathfrak{m})\ll P^{n+t-(3-C)\left(\frac{1}{v}+\frac{n}{8}\right)+A\left\{\frac{2}{v}+\frac{n}{8}-B\left(\frac{1}{v}+\frac{n}{8}\right)\right\}+\varepsilon}+
 P^{n+t-\frac{3}{v}-\Delta(\frac{n}{8}-\frac{2}{v})+\varepsilon}, \label{remarkoflemma7}
\end{equation}
provided that $nv>16$.
\end{remark*}
The next lemma is an extension of \cite[Lemma 14]{Browning1}. A series of parameters and conditions are listed first. We are sorry that readers may be confused by these parameters and conditions at first glance. They do shorten the proof and make the lemma convenient to use. Actually when we try to apply it in definite cases, they can be computed and verified easily by computers.

\medskip

Parameters:
\begin{gather*}
\rho_0:=\frac{2}{n},\quad \pi_0:=\frac{-2\Lambda+2t+4n-3}{n},\\
\rho_1:=\frac{n(n-5)}{n^2 -5n+2},\quad \pi_1:=\frac{-2(n^2-2n(\Lambda-t-1)-2)}{n^2 -5n+2},\\
\rho_2:=\frac{n-8}{n-4},\quad \pi_2:=\frac{8\Lambda-5n-8t}{n-4},\\
\Upsilon:=\frac{-6\Lambda +6t+6n-3}{n-1},\quad Q:=P^{3\varepsilon}\left(1+P^{\Upsilon}\right),\\
\Xi:=\frac{-\pi_1+\pi_0}{\rho_1-\rho_0}=\frac{(3n-2)(-2\Lambda+2t+2n-3)}{n^2-6n+4},\\
\phi_0=R^{-\rho_0}P^{-\pi_0},\quad \phi_1=R^{-\rho_1}P^{-\pi_1},\quad \phi_2=R^{-\rho_2}P^{-\pi_2}\\
\end{gather*}
Conditions:
\begin{eqnarray}
&3/2-\Upsilon>0,  \label{req0}\\
&\Lambda-t-3/2> 0, \label{req1}\\
&\Lambda-t-n/2> 0,  \label{req2}\\
&2\Lambda-2t-n-2\Upsilon+3>0, \label{req3}\\
&10 \Lambda -10 t- 8 n+3 \geq 0, \label{req7}\\
&\Lambda-\left(\frac{2}{v}+\frac{n}{8}-\rho_1\left(\frac{1}{v}+\frac{n}{8}\right)\right)\cdot \Xi -n-t+\pi_1\left(\frac{1}{v}+\frac{n}{8}\right)>0, \label{req11}\\
&\pi_2-3\geq 0. \label{req12}
\end{eqnarray}

%&v-\frac{4n}{(n-1)(n-2)}>0, \label{req4}\\
%&vn^2-(3v+4)n+2v>0, \label{req5}\\
%&vn^2-(v+10)n+8>0, \label{req6}\\
%&-\frac{2}{v}+\frac{n}{8}-\rho_2\left(\frac{1}{v}-\frac{n}{8}\right)>0, \label{req9}\\
%&\pi_2-3\geq 0, \label{req8}\\

%Note that
%\begin{displaymath}
%\rho_1-\rho_0=\frac{(n-1)v(vn^2-(v+10)n+8)}{(v(n-1)+2) (vn^2-n(4+3v)+2v)},
%\end{displaymath}
%\begin{displaymath}
%\rho_0+\rho_2=\frac{nv(vn^2-(v+10)n+8)}{(3n-2)(2-v)(v(n-1)+2)},
%\end{displaymath}
%\begin{displaymath}
%\rho_1+\rho_2=\frac{(n^2-5n+2)v(vn^2-(v+10)n+8)}{(3n-2)(2-v)(vn^2-n(4+3v)+2v)}.
%\end{displaymath}
%So when $n\geq 6, 1\leq v\leq 2$ and (\ref{req-1}), (\ref{req-2}) holds, one has $\rho_1-\rho_0>0,\,\rho_0+\rho_2>0,\,\rho_1+\rho_2>0$. Put $c=2\max\{(\rho_1-\rho_0)^{-1},\,(\rho_0+\rho_2)^{-1},\,(\rho_1+\rho_2)^{-1}\}$, which is a positive constant only depending on $n$ and $v$. Calculation shows that $\phi_1\leq \phi_0\leq \phi_2$ if $R\geq P^{-\Xi+c\delta}$.

The condition (\ref{req0}) ensures that the $R$ we take into consideration always satisfies $R\leq P^{3/2}$ and then Lemma \ref{LE4} can be applied. By Dirichlet's approximation theorem, for any $\alpha\in [0,1]$ there exists integers $a$ and $q$ such that
\begin{equation}
\alpha=a/q+\beta,\quad 1\leq a\leq q\leq Q,\quad (a,q)=1,\quad |\beta|\leq 1/(qQ). \label{apr}
\end{equation}
By dyadic summation, we have
\begin{displaymath}
I_v(S;t,\mathfrak{a})\ll P^{t+\varepsilon/2}(\log P)^2 \max\limits_{R,\phi,\pm} \mathcal{M}_v (R,\phi,\pm)^{1/v},
\end{displaymath}
where the maximum runs over the possible sign changes and $R,\phi$ are in the range described by $\mathfrak{a}$ and satisfy
\begin{displaymath}
1\leq R\leq Q,\quad 0< \phi\leq (RQ)^{-1}.
\end{displaymath}
A direct deduction shows that
\begin{equation}
1\leq R\leq Q,\quad R\phi \leq Q^{-1},\quad R^2\phi\leq 1. \label{apr_use}
\end{equation}
\begin{lemma} \label{LE8}
Let $n\geq 6$. Denote $\mathfrak{m}_0$ the set of $\alpha \in \mathfrak{m}$ with the representation (\ref{apr}) with
\begin{displaymath}
q\leq P^{\Xi+c\delta},\quad |\alpha-a/q|\leq \phi_2P^{\delta},
\end{displaymath}
where $c$ is a positive constant depending only on $n$. Then

(\romannumeral1) we have $I_2(S;t,\mathfrak{m}\setminus \mathfrak{m}_0)=\textit{o}\left(P^\Lambda\right)$, provided that (\ref{req0})-(\ref{req11}) holds.

(\romannumeral2) we have $I_2(S;t,\mathfrak{m})=\textit{o}\left(P^\Lambda\right)$, provided that $\Xi \leq 0$ and (\ref{req0})-(\ref{req12}) holds.
\end{lemma}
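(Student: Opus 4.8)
The plan is to run the dyadic decomposition over $\mathfrak{m}$ exactly as set up before the lemma, so that
\[
I_2(S;t,\mathfrak{a})\ll P^{t+\varepsilon}(\log P)^2\max_{R,\phi,\pm}\mathcal{M}_2(R,\phi,\pm)^{1/2},
\]
with $(R,\phi)$ ranging over the box described by \eqref{apr_use}, and then to bound $\mathcal{M}_2(R,\phi,\pm)$ by Lemma \ref{LE4} after an optimal choice of the auxiliary parameter $H\in[1,P]$. The three competing shapes of $\phi$ relative to $R$ — roughly $\phi\asymp\phi_0$, $\phi\asymp\phi_1$, $\phi\asymp\phi_2$ — are precisely the three regimes in which a different term in $F$ (respectively the trivial term $P^3$) dominates inside Lemma \ref{LE4}; the parameters $\rho_i,\pi_i$ are the exponents at which $H$ is tuned to equalise the relevant pair of terms. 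So the first step is to substitute $\psi_H=\phi+1/(P^2H)$ and $F=1+(RH^3\psi_H)^{n/2}+H^n/(R^{n/2}(P^2\psi_H)^{(n-2)/2})$ into the bound of Lemma \ref{LE4}, open the three summands, and for each choose $H$ to balance, obtaining in each subregime an estimate of the form $\mathcal{M}_2(R,\phi,\pm)\ll P^3 + R^{?}\phi^{?}P^{?}$. Here the hypothesis $n\geq 6$ is what makes the exponents come out in the stated direction, and \eqref{req0} guarantees $R\leq Q\leq P^{3/2+\varepsilon}$ so that Lemma \ref{LE4} (and Lemma \ref{LE3}) apply throughout.

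For part (i), after inserting these bounds into $I_2(S;t,\cdot)$ one is left with a finite list of monomials in $R$ and $\phi$, each of the form $P^{E}R^{\gamma}\phi^{\mu}$ with $E,\gamma,\mu$ explicit linear functions of $\Lambda,t,n,v$; on the region $\mathfrak{m}\setminus\mathfrak{m}_0$ we have the extra information $q=R>P^{\Xi+c\delta}$ or $|\beta|=\phi>\phi_2P^{\delta}$, and also the minor-arc constraints $R>P^\Delta$ or $\phi>P^{-3+\Delta}$. The job is then purely bookkeeping: for each monomial, use the appropriate boundary of the $(R,\phi)$-box together with whichever exclusion applies, check the sign of the remaining exponent, and verify the resulting power of $P$ is $<\Lambda$ — which is exactly what conditions \eqref{req1}, \eqref{req2}, \eqref{req3}, \eqref{req7}, \eqref{req11} encode. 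Condition \eqref{req3} handles the term that is large when $R$ and $\phi$ sit at $\phi\asymp\phi_0$; \eqref{req7} controls the crossover point $\rho_1$ where $nv/2$ passes the relevant threshold; and \eqref{req11} is precisely the statement that, at the worst vertex $(R,\phi)=(P^\Xi,\phi_1)$ where the $\phi_0$- and $\phi_1$-lines meet, the exponent is still negative after subtracting $\Lambda$. The gain of a genuine power $P^{-c\delta}$ (rather than only $P^\varepsilon$) on $\mathfrak{m}\setminus\mathfrak{m}_0$ comes from the strict inequalities and from the $P^\delta$ slack built into the definition of $\mathfrak{m}_0$, which is what upgrades $O(P^\Lambda)$ to $o(P^\Lambda)$.

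For part (ii), when $\Xi\leq 0$ the excluded set $\mathfrak{m}_0$ would require $q\leq P^{\Xi+c\delta}$, i.e. essentially $q=1$, together with $|\beta|\leq\phi_2P^\delta$; but on $\mathfrak{m}$ we cannot simultaneously have $q\leq P^\Delta$ and $|\beta|\leq P^{-3+\Delta}$, and condition \eqref{req12}, namely $\pi_2\geq 3$, forces $\phi_2\leq P^{-3}$, so $\mathfrak{m}_0$ is in fact empty. Hence $I_2(S;t,\mathfrak{m})=I_2(S;t,\mathfrak{m}\setminus\mathfrak{m}_0)=o(P^\Lambda)$ by part (i), and no new estimation is needed beyond noting this emptiness. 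The main obstacle I anticipate is not any single inequality but the combinatorial bookkeeping of step two: keeping track of which of the several monomials is dominant in which corner of the $(R,\phi)$-region, making sure every corner is covered by one of the minor-arc exclusions, and confirming that the listed conditions \eqref{req1}–\eqref{req12} are collectively sufficient (and that no monomial has been overlooked). This is the step where an error is most likely to hide, and — as the authors note before the lemma — it is exactly the part that is best checked by machine for each concrete $(n,v,t,\Lambda)$ to which the lemma is later applied.
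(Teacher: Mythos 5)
Your proposal matches the paper's argument in all essentials: a dyadic decomposition in $(R,\phi)$, Lemma~\ref{LE4} with a regime-dependent choice of $H$ on the two sub-regions governed by the $\phi_0$- and $\phi_1$-curves (the paper packages these as two auxiliary lemmas), the residual corner $q\le P^{\Xi+c\delta}$, $\phi_2P^{\delta}\le|\beta|\le\phi_1P^{\delta}$ handled by the Davenport $L^\infty$ bound of Lemma~\ref{LE3}, and for part (\romannumeral2) the observation that $\Xi\le 0$ and $\pi_2\ge 3$ force $\mathfrak m_0\subseteq\mathfrak M$, hence $\mathfrak m_0=\varnothing$. The one small miscue is attributing the $\phi_2$ threshold and conditions \eqref{req11}--\eqref{req12} to a term of $F$ in Lemma~\ref{LE4}: they actually arise from the two-term Davenport bound of Lemma~\ref{LE3} applied at the vertex $R\asymp P^{\Xi}$, $\phi\asymp\phi_1$, not from the Van der Corput estimate at all.
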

We prove it through the following two lemmas. The constant in the expression $\textit{O}(\delta)$ occurring in the proof of this lemma only depends on $n$.
\begin{lemma} \label{CL1}
Suppose that (\ref{req0}), (\ref{req1}), (\ref{req2}) holds. We have
\begin{displaymath}
I_2(S;t,\mathfrak{n}_1)=\textit{o}\left(P^\Lambda\right),
\end{displaymath}
where $\mathfrak{n}_1$ is the set of $\alpha\in \mathfrak{m}$ with the representation (\ref{apr}) with
\begin{displaymath}
q\leq Q,\quad \max\{\phi_0,\phi_1 P^\delta\}\leq |\alpha-a/q|\leq (qQ)^{-1}.
\end{displaymath}
\end{lemma}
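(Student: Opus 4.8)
The plan is to reduce the bound on $I_2(S;t,\mathfrak{n}_1)$ to an application of Lemma~\ref{LE4} together with a dyadic decomposition, exactly as in the preamble to Lemma~\ref{LE8}. Write $\alpha = a/q + \beta$ as in (\ref{apr}) with $R \leq q < 2R$ and $\phi \leq |\beta| < 2\phi$. On $\mathfrak{n}_1$ we have $R \leq Q$, and by condition (\ref{req0}) (i.e. $3/2 - \Upsilon > 0$) this forces $R \leq P^{3/2}$, so Lemma~\ref{LE4} is applicable; the lower constraint $|\beta| \geq \max\{\phi_0, \phi_1 P^\delta\}$ means the ranges of $\phi$ we must consider split at the thresholds $\phi_0 = R^{-\rho_0}P^{-\pi_0}$ and $\phi_1 = R^{-\rho_1}P^{-\pi_1}$. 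The key reduction is
\begin{displaymath}
I_2(S;t,\mathfrak{n}_1)\ll P^{t+\varepsilon/2}(\log P)^2 \max_{R,\phi,\pm} \mathcal{M}_2(R,\phi,\pm)^{1/2},
\end{displaymath}
where the max is over dyadic $R \in [1,Q]$, dyadic $\phi$ in the relevant range with $R\phi \leq Q^{-1}$, and the two choices of sign.

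Next I would insert the estimate of Lemma~\ref{LE4} with $v=2$: $\mathcal{M}_2(R,\phi,\pm) \ll P^3 + R^2 \phi^{0}\cdot (\psi_H P^{2n-1+\varepsilon}/H^{n-1})\cdot F$, and then optimize the auxiliary parameter $H \in [1,P]$ inside $F = 1 + (RH^3\psi_H)^{n/2} + H^n/(R^{n/2}(P^2\psi_H)^{(n-2)/2})$, where $\psi_H = \phi + 1/(P^2 H)$. This is the familiar Heath-Brown/Browning optimization: one balances the three terms of $F$ against each other by choosing $H$ as a suitable power of $P$ depending on $R$ and $\phi$, and then the contribution $P^3$ is dominated in the relevant ranges because condition (\ref{req1}) ($\Lambda - t - 3/2 > 0$) gives exactly $P^{3/2 + t} = o(P^\Lambda)$ after taking the square root. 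The surviving main term of $\mathcal{M}_2^{1/2}$ is, after the optimization, a monomial $P^{\text{(something)} + \varepsilon} R^{a}\phi^{b}$, and one checks that on the sub-range $\phi_0 \leq |\beta|$ the exponent is controlled by condition (\ref{req2}) ($\Lambda - t - n/2 > 0$), while on the sub-range $\phi_1 P^\delta \leq |\beta|$ it is controlled by the same inequalities together with the explicit definitions of $\rho_1,\pi_1$ (which were precisely chosen to make $\phi_1$ the break-point of the optimization). The role of $\phi_0$ is to handle the regime where the $1/(P^2 H)$ term dominates $\psi_H$, forcing $H = P$; the role of $\phi_1 P^\delta$ is to handle the regime where $(RH^3\psi_H)^{n/2}$ is the dominant term of $F$.

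In carrying this out I would treat the dyadic ranges of $\phi$ one at a time — roughly: (a) $\phi \geq P^{-3/2}R^{-1}$ handled by Lemma~\ref{LE3}'s pointwise bound as in the proof of Lemma~\ref{LE7}; (b) $\phi_1 P^\delta \leq \phi \leq P^{-3/2}R^{-1}$, where $F$ is dominated by its last term and $H = P$ is optimal, giving a bound governed by (\ref{req2}); (c) $\phi_0 \leq \phi \leq \phi_1 P^\delta$, where the middle term of $F$ enters and one optimizes $H$ as an intermediate power — here the definitions of $\rho_1, \pi_1$ make the worst case occur at $\phi = \phi_1 P^\delta$, and the $P^\delta$ provides the $o(P^\Lambda)$ rather than $O(P^\Lambda)$. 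The main obstacle, and the part requiring genuine care rather than bookkeeping, is verifying that the $P^\delta$-gain in range (c) is not eaten up by the accumulated $O(\delta)$ losses from the various optimizations and from the $(\log P)^2$ factor; this is where one uses that $\delta$ is chosen small relative to the strict inequalities (\ref{req1}), (\ref{req2}) and that the constant $c$ in the eventual Lemma~\ref{LE8} absorbs these. The remaining bookkeeping — confirming that $R \leq Q$ combined with $R\phi \leq Q^{-1}$ keeps us inside the hypotheses of Lemma~\ref{LE4}, and that each monomial bound is $o(P^\Lambda)$ — is routine given (\ref{req0})--(\ref{req2}).
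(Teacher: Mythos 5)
Your high-level framework (reduce via dyadic decomposition to $\mathcal{M}_2(R,\phi,\pm)$, apply Lemma~\ref{LE4} with $v=2$, choose $H$, and invoke (\ref{req0})--(\ref{req2})) is the right one, but the case analysis and the claimed choices of $H$ are wrong in a way that would not produce the bound. First, your case (c) with $\phi_0\leq \phi\leq \phi_1 P^\delta$ is vacuous on $\mathfrak{n}_1$: the defining constraint is $|\beta|\geq \max\{\phi_0,\phi_1 P^\delta\}$, so both $\phi\geq \phi_0$ and $\phi\geq \phi_1 P^\delta$ hold simultaneously throughout. Second, in your case (b) the choice $H=P$ is not admissible: for $\phi$ up to $P^{-3/2}R^{-1}$ one gets $RH^3\psi_H\asymp RP^3\phi$ as large as $P^{3/2}$, so the middle term of $F$ is of size $P^{3n/4}$ and Lemma~\ref{LE4} gives nothing. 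Third, your case (a) (applying Lemma~\ref{LE3}'s pointwise $L^\infty$ bound for $\phi\geq P^{-3/2}R^{-1}$ ``as in the proof of Lemma~\ref{LE7}'') is too weak for the $n$ in which Lemma~\ref{CL1} is actually used: with $n=9$, $v=2$, $t=1$, $\Lambda=8$ the resulting contribution near $\phi\asymp(RQ)^{-1}$, $R\asymp Q\asymp P^{\Upsilon}$ with $\Upsilon=9/8$ is of order $P^{t+n}Q^{-n/8}\asymp P^{10-81/64}$, which exceeds $P^{8}$. The pointwise bound of Lemma~\ref{LE3} is precisely what Lemma~\ref{LE4} is designed to improve upon in this regime, and Lemma~\ref{LE7}'s use of it relies on different parameter ranges, so this step cannot be imported.

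The paper's actual proof treats all of $\mathfrak{n}_1$ uniformly with a single choice
\[
H=\Big\lfloor P^\varepsilon\bigl(1+R\phi^{1/2}P^{-\Lambda+t+n-1/2}\bigr)^{2/(n-1)}\Big\rfloor,
\]
selected not by balancing the three terms of $F$ against each other, but so that the outer factor $R(\psi_H P^{2n-1}/H^{n-1})^{1/2}$ is $\ll P^{\Lambda-t-c\varepsilon}$ provided $F\ll1$. One then checks $F\ll1$ directly: the constraint $\phi\geq\phi_0$ forces $\psi_H\asymp\phi$ (so the $1/(P^2H)$ piece never dominates $\psi_H$ --- the opposite of what you attribute to $\phi_0$); the inequalities (\ref{apr_use}) and the definitions of $Q$ and $\Upsilon$ give $RH^3\psi_H\ll1$; and the constraint $\phi\geq\phi_1 P^\delta$ controls the \emph{third} term $H^nR^{-n/2}P^{-(n-2)}\psi_H^{-(n-2)/2}$, whose exponent in $\phi$ is strictly negative for $n\geq6$ once $H$ is substituted. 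Condition (\ref{req2}) is what guarantees $H\leq P$ so the choice is admissible, and (\ref{req1}) disposes of the $P^{3}$ term of Lemma~\ref{LE4}. To repair your proposal you should drop the tripartite split and instead exhibit this single $H$, verify $H\leq P$ from (\ref{req2}), and check $F\ll1$ from the two lower bounds on $\phi$.
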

\begin{proof}
It follows from Lemma \ref{LE4} that
\begin{displaymath}
I_2(S;t,\mathfrak{n}_1)\ll P^{t+\varepsilon}\left\{P^{3/2}+\max\limits_{1\leq R\leq Q \atop \max\{\phi_0,\phi_1P^\delta\}\leq \phi\leq (RQ)^{-1}}R\left(\frac{\psi_H P^{2n-1}}{H^{n-1}}F\right)^{1/2}\right\}.
\end{displaymath}
The first term on the right side is $\textit{O}(P^{\Lambda})$ whenever (\ref{req1}) holds. Now we estimate the second term
\begin{displaymath}
E_1:=P^{t+\varepsilon}\max\limits_{R,\phi}R\left(\frac{\psi_H P^{2n-1}}{H^{n-1}}F\right)^{1/2}.
\end{displaymath}

Take $H=\displaystyle \left\lfloor P^\varepsilon \left(1+R\phi^{1/2}P^{-\Lambda+t+n-1/2}\right)^{2/(n-1)}\right\rfloor$, then $\psi_H\asymp \phi$ when $\phi\geq \phi_0$. Combining (\ref{apr}) and (\ref{req2}), one has
\begin{displaymath}
H\leq P^\varepsilon\left(1+ (R^2\phi)^{1/(n-1)}P^{(-2\Lambda+2t+2n-1)/(n-1)}\right)\leq P.
\end{displaymath}
Hence the choice of $H$ is appropriate. Recall that $F=1+(RH^3\psi_H)^{n/2}+H^n R^{-n/2}P^{-(n-2)}\psi_H^{-(n-2)/2}$. Calculation reveals that
\begin{displaymath}
RH^3\psi_H\ll R\phi P^{3\varepsilon}\left\{1+\left((R^2\phi)^{1/2}P^{-\Lambda+t+n-1/2}\right)^{6/(n-1)}\right\}\ll Q^{-1}P^{3\varepsilon}\left\{1+P^{6(-\Lambda+t+n-1/2)/(n-1)}\right\}\ll 1,
\end{displaymath}
in view of (\ref{apr_use}) and the choice of $Q$. And
\begin{displaymath}
H^n R^{-n/2}P^{-(n-2)}\psi_H^{-(n-2)/2}\ll R^{-\frac{n}{2}}\phi^{-\frac{n-2}{2}}P^{-(n-2)+n \varepsilon}+ R^{-\frac{n}{2}+\frac{2n}{n-1}}\phi^{-\frac{n-2}{2}+\frac{n}{n-1}}P^{-(n-2)+\frac{2n(-\Lambda+t+n-1/2)}{n-1}+n\varepsilon}
\end{displaymath}
The exponent on $\phi$ in the second term is strictly negative for $n\geq 6$, hence the second term is $\textit{O}(1)$ provided that $\phi\geq \phi_1 P^\delta$. On assuming $\phi\geq \phi_1 P^\delta$, the first term is
\begin{displaymath}
R^{-\frac{n}{2}}\phi^{-\frac{n-2}{2}}P^{-(n-2)+n \varepsilon}\ll R^{-\frac{n(n-4)}{ n^2 -  5n +2}} P^{-\frac{ n (n-2 )(-2\Lambda+2 t+2n-3)}{  n^2 -  5n +2}}\ll 1,
\end{displaymath}
provided that $-2\Lambda+2t+2n-3\geq 0$ (noting that the exponent on $R$ is strictly negative when $n\geq 6$). When $-2\Lambda+2t+2n-3< 0$, one has $\pi_1< 2$. So $\phi\geq \phi_1 P^\delta \geq R^{-\rho_1}P^{-2+\delta}$ and
\begin{displaymath}
R^{-\frac{n}{2}}\phi^{-\frac{n-2}{2}}P^{-(n-2)+n \varepsilon}\ll R^{-\frac{n}{2}}\left(R^{-\rho_1}P^{\delta}\right)^{-\frac{n-2}{2}}P^{n \varepsilon}\ll R^{-\frac{n(n-4)}{n^2-5n+2}}\ll 1.
\end{displaymath}
Now we have $F\ll 1$ and
\begin{displaymath}
E_1:=P^{t+\varepsilon}\max\limits_{R,\phi}R\left(\frac{\psi_H P^{2n-1}}{H^{n-1}}F\right)^{1/2}\ll \max\limits_{R,\phi} \frac{R\phi^{1/2}P^{(2n-1)/2+t+\varepsilon}}{H^{(n-1)/2}}\ll P^{\Lambda-\frac{n-3}{2}\varepsilon}.
\end{displaymath}
Then Lemma \ref{CL1} follows.
\end{proof}
\begin{lemma} \label{CL2}
Suppose that (\ref{req0}), (\ref{req1}), (\ref{req3}), (\ref{req7}) holds. Then
\begin{displaymath}
I_2(S;t,\mathfrak{n}_2)=\textit{o}\left(P^\Lambda\right),
\end{displaymath}
where $\mathfrak{n}_2$ collects $\alpha\in \mathfrak{m}$  with the representation (\ref{apr}) with
\begin{displaymath}
P^{\Xi+\delta} \leq q\leq Q,\quad |\alpha-a/q|\leq \min\{\phi_0,(qQ)^{-1}\}.
\end{displaymath}
\end{lemma}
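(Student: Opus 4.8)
The plan is to follow the proof of Lemma~\ref{CL1}, now exploiting that on $\mathfrak{n}_2$ the modulus $q$ is large (at least $P^{\Xi+\delta}$) and the distance $\phi:=|\alpha-a/q|$ is small (at most $\phi_0$); both facts will be used.

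First, exactly as at the start of the proof of Lemma~\ref{CL1}, we apply Lemma~\ref{LE4} with $v=2$, so that the factor $\phi^{1-v/2}$ becomes trivial, together with a dyadic decomposition of the ranges of $q$ and of $\beta$. Since (\ref{req0}) guarantees $R\le P^{3/2}$, Lemma~\ref{LE4} applies and
\begin{displaymath}
I_2(S;t,\mathfrak{n}_2)\ll P^{t+\varepsilon}\Big\{P^{3/2}+\max_{R,\phi}R\Big(\frac{\psi_H P^{2n-1}}{H^{n-1}}F\Big)^{1/2}\Big\},
\end{displaymath}
where $H\in[1,P]$ is at our disposal and the maximum runs over $P^{\Xi+\delta}\le R\le Q$ and $0<\phi\le\min\{\phi_0,(RQ)^{-1}\}=:\Phi$. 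The term $P^{t+3/2}$ is $O(P^\Lambda)$ by (\ref{req1}).

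The idea for the remaining term is that the smallness of $\Phi$ lets us take $H$ smaller than in Lemma~\ref{CL1} while still keeping $F\ll1$, which improves the prefactor $R^2\psi_H P^{2n-1}H^{-(n-1)}$. Bounding $\psi_H\le\Phi+(P^2H)^{-1}$ in the second term of $F=1+(RH^3\psi_H)^{n/2}+H^nR^{-n/2}(P^2\psi_H)^{-(n-2)/2}$ and $\psi_H\ge(P^2H)^{-1}$ in the third, one checks that $F\ll1$ once
\begin{displaymath}
H\asymp\min\big\{PR^{-1/2},\ R^{n/(3n-2)},\ (R\Phi)^{-1/3}\big\}
\end{displaymath}
(if the right-hand side is $\ll1$ the claimed bound is trivial on that range). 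With this choice $\psi_H\asymp\max\{\Phi,(P^2H)^{-1}\}$ and the estimate reduces to
\begin{displaymath}
I_2(S;t,\mathfrak{n}_2)\ll P^\Lambda+P^{t+\varepsilon}\max_{R,\phi}R\,\psi_H^{1/2}P^{(2n-1)/2}H^{-(n-1)/2}.
\end{displaymath}
One then substitutes the admissible ranges and optimises, splitting according to which of the three quantities realises the minimum defining $H$ (equivalently, according to the position of $R$ relative to $P^{2(3n-2)/(5n-2)}$) and according to whether $\Phi=\phi_0$ or $\Phi=(RQ)^{-1}$. When $R^{n/(3n-2)}$ is the binding constraint one invokes $R\ge P^{\Xi+\delta}$ and the definition of $\Xi$: the exponent of $R$ is negative for $n\ge6$, so the lower bound on $R$ produces a saving $P^{-c\delta}$. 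When $PR^{-1/2}$ is binding one lets $R$ grow up to $Q$ and uses the definition of $\Upsilon$; the inequality that must hold is (\ref{req3}) in the intermediate regime and (\ref{req7}) in the extreme case $H\asymp PR^{-1/2}$, $R\asymp Q$, where the non-strict (\ref{req7}) is upgraded to the required $o(\cdot)$ by the $\delta$ (or $\varepsilon$) already built in.

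The main obstacle is purely organisational: enumerating the finitely many sub-ranges of $(R,\phi)$, checking $H\ge1$ and $F\ll1$ uniformly on each, and verifying that on each the exponent of $P$ lies strictly below $\Lambda$ under precisely the hypotheses (\ref{req0}), (\ref{req1}), (\ref{req3}), (\ref{req7}). As with Lemma~\ref{LE8}, this last step amounts to comparing linear forms in $\Lambda,t,n$ and is mechanical (and computer-checkable).
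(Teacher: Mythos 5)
Your plan is correct, but it organises the choice of $H$ dually to the way the paper does, so a brief comparison is warranted. The paper fixes one explicit value, $H=\lfloor P^{4\varepsilon}(1+RP^{-\Lambda+t+n-3/2})^{2/n}\rfloor$, which is essentially the \emph{smallest} $H$ for which the prefactor $R\,\psi_H^{1/2}P^{(2n-1)/2}H^{-(n-1)/2}\cdot P^{t}$ is already $\ll P^{\Lambda}$; it is also precisely the threshold at which $\phi_0\asymp(P^2H)^{-1}$, so that $\psi_H\asymp(P^2H)^{-1}$ on all of $\mathfrak n_2$ and no case split on $\psi_H$ is required. The hypotheses (\ref{req0}), (\ref{req3}), (\ref{req7}) and the lower bound $R\geq P^{\Xi+\delta}$ then enter only to verify $H\leq P$ and $F\ll P^{18\varepsilon}$. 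You instead take the \emph{largest} $H$ keeping $F\ll1$, namely $H\asymp\min\{PR^{-1/2},\,R^{n/(3n-2)},\,(R\Phi)^{-1/3}\}$, which makes $H\leq P$ automatic and moves all the hypotheses into the final exponent computation, at the cost of splitting both on the binding constraint and on the regime of $\psi_H$. Since the bound is monotone decreasing in $H$ once $F\ll1$, and the paper's $H$ is admissible for your constraints up to harmless $P^{O(\varepsilon)}$ factors, your bound is never worse than the paper's; the case-by-case linear-inequality check indeed closes with exactly (\ref{req0}), (\ref{req1}), (\ref{req3}), (\ref{req7}) plus $R\geq P^{\Xi+\delta}$, with the borderline case $H\asymp PR^{-1/2},\ R\asymp Q$ rescued, as you say, by the extra $P^{3\varepsilon}$ built into $Q$ and the hierarchy $\varepsilon<\delta$. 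The one thing you should make explicit if you write this out: your use of $\psi_H\geq(P^2H)^{-1}$ in the third term of $F$ is a deliberate weakening (when $\phi\asymp\Phi$ dominates, the sharp constraint would be $H\ll R^{1/2}(P^2\Phi)^{(n-2)/(2n)}$ rather than $R^{n/(3n-2)}$), so it needs saying that taking the smaller $H$ only hurts the final bound, and that even so the resulting exponent still lies below $\Lambda$ under the stated hypotheses. Finally, note that you have only described the exponent check, not carried it out; the paper's one-choice-of-$H$ route avoids the multi-way case analysis precisely because it was engineered to make that verification a single line.
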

\begin{proof}
Similarly one has
\begin{displaymath}
I_2(S;t,\mathfrak{n})\ll P^{t+\varepsilon}\left\{P^{3/2}+\max\limits_{1\leq R\leq Q \atop \phi \leq \min\{\phi_0,\phi_1 P^{\delta}, \phi_2P^{-\delta}\}}R\left(\frac{\psi_H P^{2n-1}}{H^{n-1}}F\right)^{1/2}\right\}:=P^{t+3/2+\varepsilon}+E_2.
\end{displaymath}
And (\ref{req1}) ensures the first term on the right is $\textit{O}(P^{\Lambda})$. Take $H=\left\lfloor P^{4\varepsilon} \left(1+RP^{-\Lambda+t+n-3/2}\right)^{2/n}\right\rfloor$, then $(P^2H)^{-1}\ll \psi_H\ll (P^2H)^{-1}P^{4\varepsilon}$ for $\phi\leq \phi_0$. Combining (\ref{apr_use}), (\ref{req3}) and the choice of $Q$ gives
\begin{displaymath}
H\ll P^{4\varepsilon}\left\{1+\left(QP^{-\Lambda+t+n-3/2}\right)^{2/n}\right\}\leq P,
\end{displaymath}
i.e., the choice of $H$ is appropriate.
On assuming (\ref{req0}) and (\ref{req7}), one reaches
\begin{eqnarray}
RH^3\psi_H\ll RH^2P^{-2+4\varepsilon}\ll RP^{-2+12\varepsilon}\left\{1+\left(R P^{-\Lambda+t+n-3/2}\right)^{4/n}\right\} \nonumber\\
\ll QP^{-2+12\varepsilon}+Q^{1+\frac{4}{n}}P^{\frac{-4\Lambda+4t+2n-6}{n}+12\varepsilon} \label{reproof}\\
\ll 1+P^{\frac{(n+2) (-10 \Lambda + 10 t+ 8 n -3 )}{ n(n-1)}+18\varepsilon}\ll P^{18\varepsilon}. \nonumber
\end{eqnarray}
Moreover,
\begin{displaymath}
H^n R^{-n/2}P^{-(n-2)}\psi_H^{-(n-2)/2}\ll R^{-n/2}P^{(3n/2-1)4\varepsilon}+ R^{-n/2+3-2/n}P^{\left(-\Lambda+t+n-\frac{3}{2}\right)\left(3-\frac{2}{n}\right)+(3n/2-1)4\varepsilon}\ll 1,
\end{displaymath}
whenever $R\geq P^{\frac{(3n-2) ( - 2 \Lambda + 2 t+ 2 n -3)}{ n^2 - 6 n +4}+\delta}=P^{\Xi+\delta}$. (Note that the exponent on $R$ in the second term is strictly negative when $n\geq 6$.) Now we have $F\ll P^{18\varepsilon}$ and
\begin{displaymath}
E_2:=P^{t+\varepsilon}\max\limits_{R,\phi}R\left(\frac{\psi_H P^{2n-1}}{H^{n-1}}F\right)^{1/2}\ll \max\limits_{R,\phi} \frac{RP^{(2n-3)/2+t+12\varepsilon}}{H^{n/2}}\ll P^{\Lambda-(2n-10)\varepsilon}.
\end{displaymath}
Then Lemma \ref{CL2} follows.
\end{proof}
\begin{proof}[of Lemma \ref{LE8}]
First suppose that $R\geq P^{\Xi+c\delta}$, where $c=\max\left\{(\rho_1-\rho_0)^{-1},1\right\}$. Note that
\begin{displaymath}
\rho_1-\rho_0=\frac{(n-1)(n^2-6n+4)}{n(n^2-5n+2)}>0
\end{displaymath}
for $n\geq 6$, the positive constant $c$ depends only on $n$. One can check that $\phi_0\geq \phi_1 P^{\delta}$ under the condition $R\geq P^{\Xi+c\delta}$ and the choice of $c$. A combination of Lemma \ref{CL1} and Lemma \ref{CL2} shows that
\begin{displaymath}
I_1(S;t,\mathfrak{m}_1)=\textit{o}\left(P^{\Lambda}\right),
\end{displaymath}
where $\mathfrak{m}_1$ denotes the set of $\alpha\in\mathfrak{m}$ with the representation (\ref{apr}) with $q\geq P^{\Xi+c\delta}$.

For $R\leq P^{\Xi+c\delta}$, Lemma \ref{CL1} again shows that $I_1(S;t,\mathfrak{m}_2)=\textit{o}\left(P^{\Lambda}\right)$, where $\mathfrak{m}_2$ the $\alpha\in \mathfrak{m}$ with the representation (\ref{apr}) with $q\leq P^{\Xi+c\delta}, \quad |\beta|\geq \phi_1P^{\delta}$.

As for the remaining range, an application of Lemma \ref{LE3} yields
\begin{displaymath}
\begin{aligned}
P^{t+\varepsilon}\mathcal{M}_2(R,\phi,\pm)^{1/2}\ll &P^{t+\varepsilon}(R^2\phi)^{1/2}P^{n+\varepsilon}\left\{R\phi+(R\phi P^3)^{-1}\right\}^{n/8}\ll P^{n+t+2\varepsilon}R^{1+\frac{n}{8}}(\phi_1P^{c\delta})^{\frac{1}{2}+\frac{n}{8}}+ P^{\frac{5n}{8}+t+2\varepsilon}R^{1-\frac{n}{8}}\phi^{\frac{1}{2}-\frac{n}{8}}\\
\ll &R^{1+\frac{n}{8}-\rho_1\left(\frac{1}{2}+\frac{n}{8}\right)}P^{n+t-\pi_1\left(\frac{1}{2}+\frac{n}{8}\right)+\textit{O}(\delta)}+R^{1-\frac{n}{8}}\phi^{\frac{1}{2}-\frac{n}{8}}P^{\frac{5n}{8}+t+2\varepsilon}.
\end{aligned}
\end{displaymath}
The first term is $\textit{o}\left(P^\Lambda\right)$ when (\ref{req11}) holds. The second term is $\textit{o}\left(P^\Lambda\right)$ when $\phi\geq \phi_2 P^{\delta}$. Then (\romannumeral1) follows.

Now if $\Xi\leq 0$, then $q\leq P^{\Xi+c \delta}$ and $|\beta|\leq \phi_2 P^{\delta}$ implies $q\leq P^\Delta$ and $|\beta|\leq P^{-\pi_2+\textit{O}(\delta)}\leq P^{-3+\Delta}$ on assuming that (\ref{req12}) holds. Then $\alpha$ lies in the major arcs and $\mathfrak{m}_0=\emptyset$. Hence (\romannumeral2) follows.
\end{proof}
The treatment of (1,1,9)-forms needs a certain fourth moment of a cubic exponential sum. The following lemma is a slight modification of Br\"{u}dern \cite[Theorem 2]{Bru91}, which involves an application of a Kloosterman refinement based on \cite{Hoo86}.

Define the weight function
\begin{displaymath}
w(x):=
\begin{cases}
e^{-\frac{1}{1-x^2}},\quad &\text{if }|x|<1,\\
0, &otherwise.
\end{cases}
\end{displaymath}

Let
\begin{displaymath}
T(\alpha)=\sum\limits_{x\in\mathbb{Z}}w(P^{-1}x)e\left(\alpha x^3\right).
\end{displaymath}
Denote
\begin{displaymath}
\mathfrak{N}=\mathfrak{N}(R,\phi):=\bigcup\limits_{R<q\leq 2R}\bigcup\limits_{a=1 \atop (a,q)=1}^q\left[\frac{a}{q}+\phi, \, \frac{a}{q}+2\phi\right].
\end{displaymath}
\begin{lemma} \label{LE9}
For $\phi\leq P^{-3}$, we have
\begin{displaymath}
\int_{\mathfrak{N}(R,\phi)} |T(\alpha)|^4 d\alpha\ll P^{\varepsilon}\left(P^4 \phi +R^{7/2}\phi+R^2\phi P^2\right).
\end{displaymath}
And for $\phi>P^{-3}$, we have
\begin{displaymath}
\int_{\mathfrak{N}(R,\phi)} |T(\alpha)|^4 d\alpha\ll P^{\varepsilon}\left(\phi^{-1/3}+R^{7/2}\phi^3 P^6+R^2\phi P^2\right).
\end{displaymath}
\end{lemma}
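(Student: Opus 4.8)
The plan is to estimate the fourth moment by opening it into a counting problem and then splitting according to whether the underlying equation is ``major'' or ``minor'', exactly as in the proof of a Kloosterman-refined Hua-type inequality. First I would write
\begin{displaymath}
\int_{\mathfrak{N}(R,\phi)}|T(\alpha)|^4 d\alpha = \sum_{R<q\le 2R}\sum_{\substack{a=1\\(a,q)=1}}^{q}\int_\phi^{2\phi}\Bigl|T\bigl(\tfrac{a}{q}+\beta\bigr)\Bigr|^4 d\beta,
\end{displaymath}
and expand $|T|^4$ as a sum over $x_1^3+x_2^3-x_3^3-x_4^3=h$ weighted by $w(P^{-1}x_i)$ and by $e(\beta h)$. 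On the tiny arcs $\mathfrak{N}(R,\phi)$ the standard $S^\ast$-approximation of Lemma \ref{LE6} (in the one-variable case $n=1$), together with partial summation, replaces $T(a/q+\beta)$ by $q^{-1}P\,S_{a,q}(x^3)\,v(\beta)$ up to an admissible error, where $v(\beta)=\int w(P^{-1}t)e(\beta t^3)dt\ll P\min\{1,(P^3|\beta|)^{-1/3}\}$. After this reduction the mean value over $\beta\in[\phi,2\phi]$ of $|v(\beta)|^4$ contributes the archimedean factors $P^4\phi$ (when $\phi\le P^{-3}$) and $\phi^{-1/3}$ (when $\phi>P^{-3}$), which already explains the shape of the two claimed bounds.

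Next I would deal with the $q$-sum. Writing $b$ for the contribution $\sum_{(a,q)=1}|S_{a,q}(x^3)|^4$, the trivial bound $S_{a,q}(x^3)\ll q^{2/3+\varepsilon}$ (Gauss-sum evaluation for the cubic) gives $\sum_{R<q\le2R}q\cdot q^{8/3}\ll R^{14/3+\varepsilon}$, but this is too weak: the desired bound has $R^{7/2}$. The gain from $14/3$ to $7/2$ is precisely where a \emph{Kloosterman refinement} is needed, following Hooley \cite{Hoo86}: instead of the pointwise bound one opens the congruence $x_1^3+x_2^3\equiv x_3^3+x_4^3\ (\mathrm{mod}\ q)$ and averages the resulting incomplete Kloosterman-type sums over $q$, extracting square-root cancellation in the complementary divisor. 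This yields the sharper exponent $R^{7/2}$ on the error term, and after restoring the $\beta$-integral it produces the middle terms $R^{7/2}\phi$ and $R^{7/2}\phi^3P^6$ respectively; the remaining term $R^2\phi P^2$ comes from the diagonal $x_1^3+x_2^3=x_3^3+x_4^3$ contribution, where the number of such quadruples with $|x_i|\ll P$ is $\ll P^{2+\varepsilon}$ and the $\beta$-range has length $\phi$ while there are $\ll R^2$ pairs $(a,q)$.

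I expect the \textbf{main obstacle} to be the Kloosterman-refinement step: one must justify that the switch of summation and the averaging over $q$ is legitimate for $q$ in a genuine dyadic range $R<q\le 2R$ with $R$ possibly as large as $P^{3/2}$, control the cubic Gauss/Kloosterman sums $S_{a,q}(x^3)$ uniformly, and check that the boundary weights $w(P^{-1}x_i)$ (smooth, compactly supported) do not obstruct the Poisson/completion steps. The two cases $\phi\le P^{-3}$ and $\phi>P^{-3}$ differ only in which bound for $v(\beta)$ dominates, so once the arithmetic input is in place the analytic bookkeeping is routine. Since the statement is explicitly only a ``slight modification'' of Br\"udern \cite[Theorem 2]{Bru91} — the modification being the insertion of the smooth weight $w$ in place of a sharp cutoff, which if anything simplifies the harmonic analysis — I would in fact import Br\"udern's argument essentially verbatim, indicating only the places where $w$ enters, and defer the Kloosterman bookkeeping to \cite{Bru91,Hoo86}.
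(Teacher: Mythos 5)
Your proposal matches the paper's treatment exactly: the paper gives no freestanding proof of Lemma~\ref{LE9}, but simply cites Br\"udern \cite[Theorem 2]{Bru91} together with the Kloosterman refinement of Hooley \cite{Hoo86}, and then remarks that replacing Br\"udern's cutoff with the smooth weight $w$ of \cite{Hoo86} does not affect the validity of the argument. Your closing move --- importing Br\"udern's proof essentially verbatim, flagging only where $w$ enters, and deferring the Kloosterman bookkeeping to \cite{Bru91,Hoo86} --- is precisely what the authors do, and your preliminary sketch of the mechanism (fourth-power expansion, archimedean factor from $v(\beta)$, Kloosterman averaging over $q$, diagonal contribution) is a reasonable gloss that neither adds to nor departs from the paper's reliance on the citation.
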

The weight function used here is slightly different from that in \cite{Bru91}. It is actually the weight in \cite{Hoo86}. However, the validity of the argument is not affected.

%=====================   section 5   =========================
\section{Bounding the minor arc estimates}

In this section, we bound the minor arc estimates and prove Proposition \ref{PR2}. Recall the definition of $\mathfrak{m}_0$ and $\mathcal{A}$. They collect the $\alpha\in [0,1]$ with the representation (\ref{apr}) with
\begin{displaymath}
\begin{aligned}
&\mathfrak{m}_0\subseteq \mathfrak{m}: &&\quad q\leq P^{\Xi+\textit{O}(\delta)},\quad |\beta|\leq \phi_2P^{\delta}=R^{-\rho_2}P^{-\pi_2+\textit{O}(\delta)},\\
&\mathcal{A}(A,B,C): &&\quad q\leq P^A,\quad |\beta|\leq q^{-B}P^{-3+C}.
\end{aligned}
\end{displaymath}
\begin{proof}[of the case $(n_1,n_2,n_3)=(1,2,8)$]
Recall $S_i(\alpha)=S(\alpha; C_i,n_i,\rho,P)$ $(0\leq i\leq 3)$. Lemma \ref{LE5} shows that
\begin{displaymath}
I_{4}(S_1;0,[0,1])\ll P^{1/2+\varepsilon}, \quad I_{4}(S_2;0,[0,1])\ll P^{5/4+\varepsilon}.
\end{displaymath}
Taking $n=8,v=2,t=7/4,\Lambda=8$, Lemma \ref{LE8}(\romannumeral1) gives $\Xi =11/20, \rho_2=0,\pi_2=5/2$ and
\begin{displaymath}
I_1(S_1S_2S_3;0,\mathfrak{m}\setminus \mathfrak{m}_0)\ll I_{4,4,2}(S_1,S_2,S_3;0,\mathfrak{m}\setminus \mathfrak{m}_0)\ll I_{2}(S_3;7/4,\mathfrak{m}\setminus \mathfrak{m}_0)=\textit{o}\left(P^8\right).
\end{displaymath}
Noting that $\mathfrak{m}_0\subseteq \mathcal{A}\left(11/20+\textit{O}(\delta),0,1/2+\textit{O}(\delta)\right):=\mathcal{A}$. The $\textit{O}(\delta)$-term occurring here can be as small as we want, so substituting $\textit{O}(\delta)$ for $\varepsilon$ does not affect the validity of the lemmas. Lemma \ref{LE6} gives
\begin{displaymath}
S_{1}(\alpha)=S_{1}^\ast (\alpha)+\textit{O}\left(P^{21/40+\textit{O}(\delta)}\right), \quad I_{4}(S_1^\ast ;0,[0,1])\ll P^{1/4+\varepsilon}.
\end{displaymath}
for $\alpha\in \mathcal{A}$. And
\begin{displaymath}
I_1(S_1S_2S_3;0,\mathfrak{m}_0)\ll I_1(S_1^\ast S_2 S_3;0,\mathfrak{m})+I_1(S_2S_3;21/40,\mathcal{A}).
\end{displaymath}
Taking $n=8,v=2,t=1/4+5/4=3/2, \Lambda=8$, Lemma \ref{LE8}(\romannumeral2) shows that $\Xi=0,\rho_2=0,\pi_2=3$ and
\begin{displaymath}
I_1(S_1^\ast S_2 S_3;0,\mathfrak{m})\ll I_{4,4,2}(S_1^\ast,S_2,S_3;0,\mathfrak{m})\ll I_{2}(S_3;3/2; \mathfrak{m})=\textit{o}\left(P^8\right).
\end{displaymath}
Denote $\widetilde{S}(\alpha):=S(\alpha;C_2+C_3,10,\rho,P)$. It follows from Lemma \ref{LE7} that
\begin{displaymath}
I_1(S_2S_3;21/40,\mathcal{A})=I_1(\widetilde{S};21/40,\mathcal{A})\ll P^{8-1/16+\textit{O}(\delta)}.
\end{displaymath}
To conclude, we have $I_1(S_1S_2S_3;0,\mathfrak{m})=\textit{o}\left(P^8\right)$. Proposition \ref{PR2} follows and $\mathcal{N}(P)\gg P^8$.
\end{proof}
\begin{proof}[of the case $(n_1,n_2,n_3)=(1,1,9)$]
Define, for $i=1,2$,
\begin{displaymath}
T_i(\alpha)=\sum\limits_{x\in\mathbb{Z}}w\left((\rho P)^{-1}x\right)e\left(\alpha C_{i}(x)\right).
\end{displaymath}
The parameters $\rho\leq 1$ is determined in Lemma \ref{LE2}. It is fixed and makes no difference to the validity of applying Lemma \ref{LE9}. Let $\mathfrak{b}\subseteq \mathfrak{a}\subseteq \mathfrak{m}$ be the set of $\alpha \in \mathfrak{m}$ with the representation (\ref{apr}) with
\begin{displaymath}
\begin{aligned}
&\mathfrak{a}: &&\quad q\leq P^{25/31+\textit{O}(\delta)},\quad |\beta|\leq q^{-1/5}P^{-11/5+\textit{O}(\delta)}\\
&\mathfrak{b}: &&\quad q\leq P^{1145/1922+\textit{O}(\delta)},\quad |\beta|\leq q^{-1/5}P^{-1867/775+\textit{O}(\delta)}.
\end{aligned}
\end{displaymath}

It is easy to see that, for $i=1,2$, one has
\begin{displaymath}
I_{4}(T_i,0,[0,1])\ll P^{1/2+\varepsilon}
\end{displaymath}
by regarding the left side as the weighted number of solutions to the equality
\begin{displaymath}
C_i(x_1)+C_i(x_2)=C_i(x_3)+C_i(x_4).
\end{displaymath}
Taking $n=9,v=2,t=1/2+1/2=1,\Lambda=8$, Lemma \ref{LE8} yields $\Xi =25/31, \rho_2=1/5,\pi_2=11/5$ and
\begin{displaymath}
I_1(T_1T_2S_3;0,\mathfrak{m}\setminus \mathfrak{a})\ll I_{4,4,2}(T_1,T_2,S_3;0,\mathfrak{m}\setminus \mathfrak{a})\ll I_{2}(S_3;1,\mathfrak{m}\setminus \mathfrak{a})=\textit{o}\left(P^8\right). \label{case1-1}
\end{displaymath}
For $R,\phi$ in the range defined by $\mathfrak{a}$, Lemma 9 gives
\begin{displaymath}
I_4(T_1;0,\mathfrak{a})\ll P^{\varepsilon} \max_{R,\phi} I_4(T_1;0,\mathfrak{N}(R,\phi))\ll P^{\frac{539}{310}\cdot \frac{1}{4}+\textit{O}(\delta)}= P^{\frac{539}{1240}+\textit{O}(\delta)}.
\end{displaymath}
Taking $n=9,v=2,t=539/620,\Lambda=8$, Lemma \ref{LE8} yields $\Xi =1145/1922, \rho_2=1/5,\pi_2=1867/775$ and
\begin{displaymath}
I_1(T_1T_2S_3;0,\mathfrak{a}\setminus \mathfrak{b})\ll I_{4,4,2}(T_1,T_2,S_3;0,\mathfrak{a}\setminus \mathfrak{b}) \ll I_{2}(S_3;539/620,\mathfrak{m}\setminus \mathfrak{b})=\textit{o}\left(P^8\right). \label{case1-1}
\end{displaymath}
For $R,\phi$ in the range defined by $\mathfrak{b}$, Lemma 9 again gives
\begin{displaymath}
I_4(T_1;0,\mathfrak{b})\ll P^{\varepsilon} \max_{R,\phi} I_4(T_1;0,\mathfrak{N}(R,\phi))\ll P^{\frac{1}{4}+\textit{O}(\delta)}.
\end{displaymath}
And noting that $\mathfrak{b}\subseteq \mathcal{A}(1145/1922+\textit{O}(\delta), 1/5, 458/775+\textit{O}(\delta)):=\mathcal{A}$. By (\ref{remarkoflemma7}), it follows that
\begin{displaymath}
I_1(T_1 T_2 S_3;0,\mathfrak{b})\ll I_{4,4,2}(T_1,T_2,S_3;0,\mathfrak{b})\ll I_{2}(S_3;1/2;\mathcal{A}\cap \mathfrak{m})\ll P^{8-\Delta/8+\textit{O}(\delta)}.
\end{displaymath}
To sum up, we now have
\begin{displaymath}
I_1(T_1T_2S_3;0,\mathfrak{m})=\textit{o}\left(P^8\right).
\end{displaymath}

The weight function $w$ satisfies $w\geq 0$ in $\mathbb{R}$ and $w(x)\gg 1$ for $|x|<\rho P/2$, which ensures the argument in the treatment of singular integral in Lemma \ref{LE2}. We conclude that
\begin{displaymath}
\mathcal{N}(P)\gg P^8.
\end{displaymath}
\end{proof}

%=====================   section 6   =========================

\section{Further remarks}

The $(1,1,8)$ and $(1,2,7)$ cases are hard to solve. The estimates on exponential sums of the $1$-forms and $2$-forms are not small enough and Heath-Brown's $L^2$ bound can not be used. Neither can it even in the case $(1,1,1,7)$. We say $C_0$ splits into four forms, and is a $(n_1,n_2,n_3,n_4)$-form, if
\begin{displaymath}
C_0(\mathbf{x})=C_1(\mathbf{x}_1)+C_2(\mathbf{x}_2)+C_3(\mathbf{x}_3)+C_4(\mathbf{x}_4),
\end{displaymath}
where $C_i\in \mathbb{Z}[\mathbf{x}_i]\,(1\leq i\leq 4)$ are cubic forms in $n_i$ variables and $n_1+n_2+n_3+n_4=n_0$.
 For $n_0=10$, according to (\ref{cases}), the only case not solved is $(1,1,1,7)$.
  We need strong hypothesis, such as Hypothesis $HW_6$ (see \cite[p.10]{HB98} for details), to solve the $(1,1,1,7)$ case.
   This hypothesis involves Riemann Hypothesis and standard analytic continuation of certain Hasse-Weil $L$-functions. We record the following proposition.
\begin{proposition} \label{TH6}
Let $X\subseteq \mathbb{P}^{n-1}$ be a hypersurface defined by a cubic form that splits into four forms, with $n\geq 10$. Assuming Hypothesis $HW_6$, we have $X(\mathbb{Q})\neq \emptyset$.
\end{proposition}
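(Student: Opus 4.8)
The plan is to reduce Proposition \ref{TH6} to the single four-form type $(1,1,1,7)$ in ten variables and then run the circle method of \S3, the hypothesis $HW_6$ entering only through the sixth-moment bound for a one-variable cubic Weyl sum. If $n\geq 11$ there is nothing to do: writing $C_0=C_1(\mathbf{x}_1)+C_2(\mathbf{x}_2)+(C_3(\mathbf{x}_3)+C_4(\mathbf{x}_4))$ exhibits $C_0$ as a cubic form that splits off two forms, so Theorem \ref{TH1} applies; hence we may assume $n_0=10$. Ordering $n_1\leq n_2\leq n_3\leq n_4$, the possible types are $(1,1,1,7)$, $(1,1,2,6)$, $(1,1,3,5)$, $(1,1,4,4)$, $(1,2,2,5)$, $(1,2,3,4)$, $(1,3,3,3)$, $(2,2,2,4)$ and $(2,2,3,3)$. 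For each of these except $(1,1,1,7)$ one can fuse two of the four blocks into a single cubic form so that $C_0$ becomes an $(n_1',n_2',n_3')$-form with $n_1'+n_2'+n_3'=10$, $(n_1',n_2')\notin\{(1,1),(1,2)\}$ and $(n_1',n_2',n_3')\neq(3,3,3)$: one reaches $(1,3,6)$ from $(1,1,2,6)$, $(1,4,5)$ from $(1,1,3,5)$, $(1,1,4,4)$ and $(1,2,2,5)$, $(3,3,4)$ from $(1,2,3,4)$, $(1,3,3,3)$ and $(2,2,3,3)$, and $(2,4,4)$ from $(2,2,2,4)$, and in every case the corollary to Theorem \ref{TH2} yields $X(\mathbb{Q})\neq\emptyset$. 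For $(1,1,1,7)$ every such fusion lands on a $(1,2,7)$- or a $(1,1,8)$-form, outside the scope of those results, so this is the one case that must be done by hand.

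For $(1,1,1,7)$ I would apply the circle method of \S3 with $n_0=10$, so that $S_0(\alpha)=S_1(\alpha)S_2(\alpha)S_3(\alpha)S_4(\alpha)$ with $S_1,S_2,S_3$ one-variable sums and $S_4=S(\alpha;C_4,7,\rho,P)$. The major arcs are handled as in Lemma \ref{LE2}: $\int_{\mathfrak{M}}S_0\,d\alpha=\mathfrak{S}\mathfrak{J}P^{7}+o(P^{7})$. Since Lemma \ref{LE2} is stated only for $n_0=11$, the convergence of $\mathfrak{S}$ must be re-established at $n_0=10$, and here one exploits the split structure: from $S_{a,q}(C_0)=\prod_{i=1}^{4}S_{a,q}(C_i)$, together with $S_{a,q}(C_i)\ll q^{2/3+\varepsilon}$ for $i\leq 3$ and Heath-Brown's bound $S_{a,q}(C_4)\ll q^{35/6+\varepsilon}$ for the good form $C_4$, one gets $S_{a,q}(C_0)\ll q^{47/6+\varepsilon}$, whence $\mathfrak{S}\ll\sum_{q}q^{1-10+47/6+\varepsilon}=\sum_{q}q^{-7/6+\varepsilon}<\infty$. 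Positivity of $\mathfrak{S}$ comes from real solubility (odd degree) and $p$-adic solubility at every $p$ — restricting a suitable variable of $C_4$ to zero produces a nonary cubic over $\mathbb{Q}_p$ that splits off a $1$-form, soluble by \cite[Proposition 2]{HB83} — while $\mathfrak{J}>0$ for small $\rho$ after putting $\mathbf{z}$ at a non-singular real zero of $C_4$ and $\mathbf{z}_1=\mathbf{z}_2=\mathbf{z}_3=\mathbf{0}$. Thus $\int_{\mathfrak{M}}S_0\,d\alpha\gg P^{7}$.

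It then remains to prove $I_1(S_1S_2S_3S_4;0,\mathfrak{m})=o(P^{7})$. I would apply H\"{o}lder's inequality with exponents $(6,6,6,2)$, legitimate because $\tfrac16+\tfrac16+\tfrac16+\tfrac12=1$, to get
\[
\int_{\mathfrak{m}}|S_1S_2S_3S_4|\,d\alpha\;\leq\;\prod_{i=1}^{3}\left(\int_{0}^{1}|S_i(\alpha)|^{6}\,d\alpha\right)^{1/6}\left(\int_{\mathfrak{m}}|S_4(\alpha)|^{2}\,d\alpha\right)^{1/2}.
\]
Under $HW_6$ one has $\int_{0}^{1}|S_i(\alpha)|^{6}\,d\alpha\ll P^{3+\varepsilon}$ for $i=1,2,3$, so each one-variable factor is $O(P^{1/2+\varepsilon})$ and the three together contribute $P^{3/2+\varepsilon}$; absorbing this exponent into the weight reduces the task to $I_2(S_4;3/2,\mathfrak{m})=o(P^{7})$. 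This I would obtain from Lemma \ref{LE8} with $n=7$, $v=2$, $t=3/2$, $\Lambda=7$: a short computation gives $\Upsilon=1$, $\Xi=0$ and $\pi_2=3$, and one checks that all of (\ref{req0})--(\ref{req12}) hold (for instance (\ref{req7}) becomes $70-15-56+3\geq0$); since $\Xi\leq0$, part (\romannumeral2) of Lemma \ref{LE8} gives $I_2(S_4;3/2,\mathfrak{m})=o(P^{7})$. Combining the pieces yields $I_1(S_1S_2S_3S_4;0,\mathfrak{m})=o(P^{7})$, hence $\mathcal{N}(P)\gg P^{7}$ and $X(\mathbb{Q})\neq\emptyset$.

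The decisive point — and exactly why $HW_6$ cannot be avoided — is the tension between the one-form sixth moment and condition (\ref{req7}). With $n=7$ and $\Lambda=7$, (\ref{req7}) amounts to $t\leq 17/10$, where $P^{t}$ is the exponent contributed by the three one-variable factors. The best \emph{unconditional} bound $\int_0^1|S_i(\alpha)|^{6}\,d\alpha\ll P^{7/2+\varepsilon}$ forces $t=7/4>17/10$, so Lemma \ref{LE8} is just out of reach; and the finer unconditional machinery — Lemma \ref{LE7} with H\"{o}lder exponents $(8,8,8,8/5)$, or Br\"{u}dern's fourth-moment estimate exploited as in the $(1,1,9)$ case — still leaves the resulting exponent strictly above $7$. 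The conjectural sixth-moment bound $P^{3+\varepsilon}$ provided by $HW_6$ lowers $t$ to $3/2$, which clears (\ref{req7}) comfortably and closes the argument. A minor point requiring explicit care is that the major-arc Lemma \ref{LE2}, proved for $n_0=11$, transfers to $n_0=10$; as indicated, the only substantive issue is the convergence of $\mathfrak{S}$, which the product structure of $C_0$ provides.
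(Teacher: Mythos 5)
Your argument is correct and is consistent with the terse indications the paper gives: the paper states the proposition without proof, remarking only that $(1,1,1,7)$ is the sole new case at $n_0=10$ and that $\mathfrak{S}$ converges because $S_{a,q}(C_i)\ll q^{2/3}$ for $1\le i\le 3$. Your reduction of the remaining four-form types to the Corollary of Theorem~\ref{TH2}, the convergence bound $\mathfrak{S}\ll\sum_q q^{-7/6+\varepsilon}$, the H\"older split $(6,6,6,2)$ with the $HW_6$ sixth moment $\ll P^{3+\varepsilon}$, and the verification of (\ref{req0})--(\ref{req12}) with $n=7$, $v=2$, $t=3/2$, $\Lambda=7$ (giving $\Upsilon=1$, $\Xi=0$, $\pi_2=3$) all check out, and your closing observation that (\ref{req7}) forces $t\le 17/10$ while the best unconditional sixth moment yields only $t=7/4$ correctly pinpoints where $HW_6$ is indispensable.
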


The singular series $\mathfrak{S}$ is still absolutely convergent in the $(1,1,1,7)$ case (we have $S_{a,q}(C_i)\ll q^{2/3}$ for $1\leq i\leq 3$, which is better than $q^{5/6}$). The $(1,1,8)$ case remains unproved under this strong hypothesis. So it seems really hard to improve $n\geq 11$ to $n\geq 10$ in Theorem \ref{TH1}.

\begin{acknowledgements}\label{ackref}
We would like to thank Professor T. D. Browning for giving talks on this topic and suggesting this problem. He has also provided us with useful advices and great help, including pointing out the improved version of \cite[Theorem 2]{Browning1}, Theorem H, \cite[Proposition 2]{HB83} and Br\"{u}dern's fourth moment estimate. We also thank Yun Gao for her kind help. The first author is especially grateful to his supervisor, Professor Hongze Li, for his unending encouragement and his effort that provides the opportunities to communicate with foreign specialists.
\end{acknowledgements}

\affiliationone{% in this example, two authors share an institution
   Boqing Xue and Haobo Dai\\
   800 Dongchuan Road, Department of Mathematics, Shanghai Jiao Tong University, Shanghai\\
   China
   \email{ericxue68@sjtu.edu.cn\\
   dedekindbest@hotmail.com}}

\end{document}